\newcommand{\fm}{\mathfrak{m}}
\newcommand{\fn}{\mathfrak{n}}
\newcommand{\fM}{\mathfrak{M}}
\newcommand{\fp}{\mathfrak{p}}
\newcommand{\fq}{\mathfrak{q}}
\newcommand{\Max}{\operatorname{Max}}
\newcommand{\Spec}{\operatorname{Spec}}
\newcommand{\colim}{\operatorname{colim}}
\newcommand{\Hom}{\operatorname{Hom}}
\newcommand{\cO}{\mathcal{O}}
\newcommand{\bC}{\mathbf{C}}
\newcommand{\bQ}{\mathbf{Q}}
\newcommand{\bZ}{\mathbf{Z}}
\newcommand{\Tag}[1]{\href{https://stacks.math.columbia.edu/tag/#1}{\texttt{#1}}}
\newcommand{\citestacks}[1]{\cite[Tag \Tag{#1}]{stacks}}
\newcommand{\citetwostacks}[2]{\cite[Tags \Tag{#1} and \Tag{#2}]{stacks}}
\newtheorem{Thm}{Theorem}
\newtheorem{Lem}[Thm]{Lemma}
\newtheorem{Cor}[Thm]{Corollary}
\newtheorem{Prop}[Thm]{Proposition}
\theoremstyle{definition}
\newtheorem{Def}[Thm]{Definition}
\newtheorem{Exam}[Thm]{Example}
\theoremstyle{remark}
\newtheorem{Rem}[Thm]{Remark}
\newtheorem{SteplocUJnotUJ}{Step}
\title[Universally Japanese]{The Japanese and universally Japanese properties for valuation rings and Pr\"ufer domains}
\author{Shiji Lyu}
\address{Department of Mathematics, Statistics, and Computer Science\\University of Illinois at Chicago\\Chicago, IL
60607-7045\\USA}
\email{\href{mailto:slyu@uic.edu}{slyu@uic.edu}}
\urladdr{\url{https://homepages.math.uic.edu/~slyu/}}
\begin{document}
\begin{abstract}
We discuss the Japanese and universally Japanese properties for valuation rings and Pr\"ufer domains.
These properties, regarding finiteness of integral closure,
have been studied extensively for Noetherian rings,
but very rarely, if ever,
for non-Noetherian rings.
Among other results,
we show that for valuation rings and Pr\"ufer domains,
the Japanese and universally Japanese properties are equivalent.
This result can be seen as a counterpart to Nagata's classical result for Noetherian rings.
This result also tells us many non-Noetherian rings,
including all absolutely integrally closed valuation rings and Pr\"ufer domains,
are universally Japanese.
\end{abstract}
\maketitle

Throughout, all rings are commutative with unit.
$\Max(R)$ denotes the set of maximal ideals of a ring $R$.
For $f\in R$,
$D(f)$ denotes the principal open subset of $\Spec(R)$ defined by $f$.
For $\fp\in\Spec(R)$, $\kappa(\fp)$ denotes the residue field of $\fp$.

A \emph{valuation ring} is a local \emph{domain} whose finitely generated ideals are principal.
A \emph{Pr\"ufer domain} is an integral domain whose localizations at prime ideals are valuation rings.
For basic properties we refer the reader to \cite[p. 25]{Coherent-Rings}.
A PID is an integral domain all ideals of which are principal.
A DVR is a local PID.

An integral domain is \emph{absolutely integrally closed} if it is normal and its fraction field is algebraically closed.
The \emph{absolute integral closure} of an integral domain is its integral closure in an algebraic closure of its fraction field.

For a ring $R$, we denote by $R(x_1,\ldots,x_n)$
the localization of the polynomial ring $R[x_1,\ldots,x_n]$
with respect to all primitive polynomials,
that is, all polynomials whose coefficients generate the unit ideal.
We denote by $R_{\operatorname{red}}$ the reduction of $R$.

\section{Introduction}

\begin{Def}    
An integral domain $S$ is \emph{N-1} if its integral closure in its fraction field is finite.
    An integral domain $S$ is \emph{N-2} (or \emph{Japanese}) if its integral closure in every finite extension of its fraction field is finite.

        A ring $R$ is \emph{universally Japanese} if every finite type $R$-algebra $A$ that is an integral domain is N-1.
        Equivalently, if every finite type $R$-algebra $A$ that is an integral domain is N-2.
\end{Def}

Most Noetherian rings appearing in algebraic geometry are universally Japanese, see \citestacks{0335}.
Since the property only concerns integral domains over the ring, it is possible to get non-Noetherian examples ``for free.''

\begin{Exam}
    An infinite direct product $A$ of fields is universally Japanese, as every prime ideal of $A$ is maximal (see for example \citetwostacks{092F}{092G}).
\end{Exam}

However, it is not super easy to find non-Noetherian universally Japanese integral domains.
\begin{Exam}\label{exam:infVarsNotUJ}
    Let $R=k[x_1,\ldots,x_n,\ldots]$ be the polynomial ring of infinitely many variables over a field $k$.
    Then $R$ is not universally Japanese,
    since the subring
$S=k[x_1^2,x_1^3,\ldots,x_n^2,x_n^3,\ldots]$
has normalization $R$, which is not finite over $S$,
and $S$ is isomorphic to a quotient of $R$.

On the other hand,
it is not hard to show that a \emph{finitely presented} $R$-algebra that is an integral domain is N-2,
as $R$ is a filtered colimit of finite type $k$-algebras with smooth transition maps,
see Lemma \ref{lem:limitN2}.
\end{Exam}

In this article, we discuss the N-2 and universally Japanese properties for valuation rings and Pr\"ufer domains,
thereby
finding classes of non-Noetherian universally Japanese integral domains.

Our main result is as follows.
This result can be seen as a Pr\"ufer domain counterpart to the classical result \cite[Th\'eor\`eme 7.7.2]{EGA4_2} of Nagata.
\begin{Thm}\label{thm:smallPrUJ}
    Let $D$ be a Pr\"ufer domain.
    Then the following are equivalent.
    \begin{enumerate}
       [label=$(\roman*)$]
        \item\label{smallPrUJ:N2} $D$ is N-2. 
        \item\label{smallPrUJ:uJ} $D$ is universally Japanese.
    \end{enumerate}
\end{Thm}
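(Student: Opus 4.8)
The implication \ref{smallPrUJ:uJ}$\Rightarrow$\ref{smallPrUJ:N2} is immediate: since $D$ is itself a finite type $D$-algebra that is a domain, the definition of universally Japanese in its N-2 form, applied to $A=D$, says precisely that $D$ is N-2. So the content is the converse, and the plan is to show that if a Pr\"ufer domain $D$ is N-2 then every finite type $D$-algebra domain $A$ is N-1. Write $K=\Frac(D)$, $L=\Frac(A)$, and let $A'$ be the integral closure of $A$ in $L$; the goal is that $A'$ is a finite $A$-module. I would induct on $d=\operatorname{trdeg}(L/K)$.

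The base case $d=0$ is where the Pr\"ufer hypothesis pays off, and I expect it to go cleanly. Here $L/K$ is finite, so by N-2 the integral closure $D_L$ of $D$ in $L$ is a finite $D$-module; moreover $D_L$ is again a Pr\"ufer domain with $\Frac(D_L)=L$, since the integral closure of a Pr\"ufer domain in a finite extension of its fraction field is Pr\"ufer. Set $B:=A\cdot D_L\subseteq L$. On one hand $B$ is a finite $A$-module, being generated over $A$ by finitely many $D$-module generators of $D_L$. On the other hand $D_L\subseteq B\subseteq L=\Frac(D_L)$ exhibits $B$ as an overring of the Pr\"ufer domain $D_L$, and overrings of Pr\"ufer domains are Pr\"ufer, hence integrally closed in $L$. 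As $A\subseteq B$ is integral with $\Frac(B)=L$, the ring $B$ must equal the normalization $A'$, so $A'=B$ is finite over $A$.

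For the inductive step $d\geq 1$, the plan is to strip off one transcendental at a time using the construction $D(x)$. Choose $t\in A$ transcendental over $K$ and recall that $D(t)$ is again a Pr\"ufer domain, with $\Frac(D(t))=K(t)$. Localizing $A$ at the primitive polynomials of $D[t]$ yields a finite type $D(t)$-algebra domain $A_t$ with $\Frac(A_t)=L$ and $\operatorname{trdeg}(L/K(t))=d-1$; granting that $D(t)$ is again N-2, the inductive hypothesis shows the normalization of $A_t$ is finite over $A_t$, i.e.\ that $A'$ becomes finite over $A$ after inverting the primitive polynomials in $t$. Separately, inverting $D\setminus\{0\}$ turns $A$ into a finite type algebra over the field $K$, whose normalization is finite by the classical theory, so $A'$ is also finite over $A$ after inverting $D\setminus\{0\}$.

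The two genuine obstacles both live in this last step. First one must prove the Lemma that $D(x)$ is N-2 whenever $D$ is: I would split an arbitrary finite extension $M/K(x)$ into a constant-field extension, controlled by N-2 of $D$ together with finiteness of $D_F$ over $D$ for the algebraic closure $F$ of $K$ in $M$, followed by a geometric extension of $F(x)$, handling the separable part via the trace form over the coherent Pr\"ufer domain $D(x)$ and the purely inseparable part via Frobenius, the inseparable case being the delicate one. Second, and harder, is the \emph{descent}: the two localizations above (inverting primitive polynomials, respectively $D\setminus\{0\}$) do not cover $\Spec(A)$, as they miss the primes that simultaneously contain a primitive polynomial in $t$ and lie over a nonzero prime of $D$, so finiteness of $A'$ on these pieces does not formally yield finiteness over $A$. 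To bridge this I would reduce to the case of a valuation ring, where $\Spec(D)$ is a chain, and use that Pr\"ufer domains are semihereditary, hence coherent, to control $A'$ as a finitely presented module once it is finite on the relevant loci, exploiting the total order to bound the ``denominators from $D$'' that could otherwise force infinitely many generators. This gluing across the value group is the step I expect to be the main difficulty.
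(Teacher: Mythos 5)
Your easy direction and your base case $d=0$ are both correct --- the overring argument (the integral closure $D_L$ of $D$ in $L$ is Pr\"ufer and finite over $D$ by N-2, overrings of Pr\"ufer domains are Pr\"ufer, hence $B=A\cdot D_L$ is integrally closed in $L$ and must equal $A'$) is a clean way to dispose of the algebraic case. But the inductive step rests on the two claims you flag as ``obstacles,'' and the resolutions you sketch for both would fail. First, the lemma that $D(x)$ is N-2 whenever $D$ is: a trace-form argument cannot work here, because over a non-Noetherian ring a submodule of a finitely generated module need not be finitely generated (the maximal ideal of a valuation ring with non-discrete value group is a non-finitely-generated submodule of the free module of rank one), and more fundamentally the obstruction to N-2 for valuation rings is not separability or defect: by Lemma \ref{lem:ValN2} there are valuation rings of residue characteristic zero --- where every finite extension is separable and defectless --- that are not N-2, purely for value-group reasons. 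What the lemma actually requires is Kuhlmann's theorem that defectlessness passes to Gauss extensions (\cite[Theorem 1.1]{K-DVR-defectless}, used in Corollary \ref{cor:VRN2}) \emph{plus} a global argument, since N-2 is not a local property of Pr\"ufer domains (Remark \ref{rem:UJnotlocal}: Heitmann's PID is locally N-2 but not N-2), so knowing each $D_\fm(x)$ is N-2 proves nothing about $D(x)$. Since $D(x)$ is a localization of $D[x]$, your lemma is essentially the $d=1$ instance of the theorem itself; and note that your induction gives you nothing at $d=1$ (for $d\geq 2$ the inductive hypothesis would cover $D[t]$, but at $d=1$ it only covers algebraic extensions), so the plan is close to circular: all of the difficulty has been deferred into an unproven lemma. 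You also never address finite type algebras factoring through a quotient $D/\fp$; the paper needs Lemma \ref{lem:valN2quot} for this, though that gap is minor and fixable.

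Second, the gluing step. You correctly identify that inverting the primitive polynomials in $t$ and inverting $D\setminus\{0\}$ leaves primes uncovered, but your proposed fix --- ``reduce to the case of a valuation ring'' --- is blocked by the same non-locality: finiteness of the normalization does not descend from the localizations $A_\fm$ to $A$. The paper's solution is a concrete device with two ingredients your plan lacks: it first reduces, via Zariski's Main Theorem (\citestacks{00QB}, inside Lemma \ref{lem:redtoPoly}), to an algebra $B$ \emph{finite} over a polynomial ring $P$, and then replaces $B$ by $B'=\Hom_B(\omega,\omega)$ with $\omega=\Hom_P(B,P)$, which is still finite over $B$ by coherence of $P$ (Theorem \ref{thm:S2ify}) and has exactly the torsion property needed to run the two-multiplicative-set Serre criterion (Lemma \ref{lem:Serre}, Corollary \ref{cor:PruferSerre}). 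In your setup $A$ is not finite over $D[t]$, so no $\omega$ is even available, and coherence alone does not supply the missing $S_2$-type input. For comparison, the paper's actual proof avoids your induction entirely: it reduces to $P=D[x_1,\ldots,x_n]$, passes to the absolute integral closure $\bigcup_i D_i$ of $D$ --- an absolutely integrally closed Pr\"ufer domain, where divisible value groups and perfect residue fields make the local-to-global step work (Theorem \ref{thm:PrufUJ}) --- and then descends back to $D$ using that each $D_i$ is finite over $D$ precisely because $D$ is N-2 (Corollary \ref{cor:unionN2allN2}). That detour through the absolutely integrally closed case is what circumvents the non-locality on which both of your obstacles founder.
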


Specializing to valuation rings, we have the following result. 
Note that
the valuation-theoretic characterization of the N-2 property is essentially done in \cite[Chap. VI, \S 8, no.4-5]{Bourbaki-CA-5-7},
see Lemma \ref{lem:ValN2}.

\begin{Thm}\label{thm:ValRingUJ}
    Let $V$ be a valuation ring.
    Then the following are equivalent.
    \begin{enumerate}
       [label=$(\roman*)$]
        \item\label{VRUJ:N2} $V$ is N-2.
        \item\label{VRUJ:uN2} $V(x_1,\ldots,x_n)$ is N-2 for all $n\in\bZ_{\geq 0}$. 
        \item\label{VRUJ:sN2} $V(x_1,\ldots,x_n)$ is N-2 for some $n\in\bZ_{> 0}$. 
        \item\label{VRUJ:uJ} $V$ is universally Japanese.
    \end{enumerate}
\end{Thm}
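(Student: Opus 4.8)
The plan is to prove the cycle \ref{VRUJ:N2} $\Rightarrow$ \ref{VRUJ:uJ} $\Rightarrow$ \ref{VRUJ:uN2} $\Rightarrow$ \ref{VRUJ:sN2} $\Rightarrow$ \ref{VRUJ:N2}. The first implication is free: a valuation ring is a Pr\"ufer domain, since its localization at any prime is again a valuation ring, so Theorem \ref{thm:smallPrUJ} already gives \ref{VRUJ:N2} $\Leftrightarrow$ \ref{VRUJ:uJ}. The implication \ref{VRUJ:uN2} $\Rightarrow$ \ref{VRUJ:sN2} is trivial. Hence the real content lies in \ref{VRUJ:uJ} $\Rightarrow$ \ref{VRUJ:uN2} and, above all, in the descent \ref{VRUJ:sN2} $\Rightarrow$ \ref{VRUJ:N2}.

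For \ref{VRUJ:uJ} $\Rightarrow$ \ref{VRUJ:uN2}, I would observe that $V[x_1,\ldots,x_n]$ is a finite type $V$-algebra that is a domain, hence N-2 once $V$ is universally Japanese, and that $V(x_1,\ldots,x_n)$ is a localization of it. Since forming the integral closure in a fixed finite extension of the fraction field commutes with localization, and finiteness of a module is preserved under localization, the N-2 property passes to localizations; thus $V(x_1,\ldots,x_n)$ is N-2 for every $n$.

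The heart of the proof is \ref{VRUJ:sN2} $\Rightarrow$ \ref{VRUJ:N2}. Fix $n>0$ with $A:=V(x_1,\ldots,x_n)$ being N-2, set $K=\Frac(V)$, and let $L/K$ be an arbitrary finite field extension with $B$ the integral closure of $V$ in $L$; I must show $B$ is finite over $V$. The field $L\otimes_K K(x_1,\ldots,x_n)$ equals the rational function field $L(x_1,\ldots,x_n)$ and is a finite extension of $\Frac(A)=K(x_1,\ldots,x_n)$ of degree $[L:K]$. The key point is that the integral closure of $A$ in $L(x_1,\ldots,x_n)$ is precisely $B\otimes_V A$: this module is a localization of the polynomial ring $B[x_1,\ldots,x_n]$, which is normal because $B$ is a normal domain, so $B\otimes_V A$ is a normal domain with fraction field $L(x_1,\ldots,x_n)$ that is integral over $A$. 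Since $A$ is N-2, $B\otimes_V A$ is finite over $A$.

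It then remains to descend finiteness along $V\to A$, which is faithfully flat (the Nagata ring $V(x_1,\ldots,x_n)$ is faithfully flat over $V$). In general, if $M\otimes_V A$ is finitely generated over $A$, then choosing a finitely generated submodule $N\subseteq M$ whose image generates $M\otimes_V A$ yields $(M/N)\otimes_V A=0$, hence $M=N$ by faithful flatness; applied to $M=B$ this gives that $B$ is finite over $V$, so $V$ is N-2. I expect the main, though not deep, obstacle to be the clean identification of the integral closure after the polynomial base change together with this descent step. An alternative route to \ref{VRUJ:sN2} $\Rightarrow$ \ref{VRUJ:N2} would invoke the valuation-theoretic criterion of Lemma \ref{lem:ValN2}: passing from $V$ to $V(x_1,\ldots,x_n)$ leaves the value group unchanged and replaces the residue field $\kappa$ by $\kappa(x_1,\ldots,x_n)$, so that the relevant finiteness conditions descend from the larger residue field back to $\kappa$.
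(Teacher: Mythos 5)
Your proposal is correct, but it takes a genuinely different route from the paper's own proof. The paper gets \ref{VRUJ:N2}$\Leftrightarrow$\ref{VRUJ:uN2}$\Leftrightarrow$\ref{VRUJ:sN2} from Corollary \ref{cor:VRN2} (whose implication \ref{VRN2c:N2}$\Rightarrow$\ref{VRN2c:uN2} rests on Kuhlmann's theorem that defectlessness passes to Gauss extensions), and then proves \ref{VRUJ:uN2}$\Rightarrow$\ref{VRUJ:uJ} directly: reduce by Lemmas \ref{lem:valN2quot} and \ref{lem:redtoPoly} to showing $V[x_1,\ldots,x_n]$ is N-2, then conclude with Corollary \ref{cor:PruferSerre}. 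You instead black-box the one hard implication \ref{VRUJ:N2}$\Rightarrow$\ref{VRUJ:uJ} into Theorem \ref{thm:smallPrUJ}; this is legitimate and not circular, since the paper's proof of Theorem \ref{thm:smallPrUJ} (via Theorem \ref{thm:PrufUJ}, Theorem \ref{thm:ExampleUJ}\ref{ExUJ:absPr}, and Corollary \ref{cor:unionN2allN2}) never invokes Theorem \ref{thm:ValRingUJ} --- this is exactly the point of the paper's remark that its own proof of Theorem \ref{thm:ValRingUJ} is ``strictly speaking unnecessary'' (though Kuhlmann's theorem is still used under the hood, as Theorem \ref{thm:PrufUJ} cites Corollary \ref{cor:VRN2}). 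In exchange, your remaining steps are softer than the paper's: \ref{VRUJ:uJ}$\Rightarrow$\ref{VRUJ:uN2} is the (correct) elementary fact that N-2 passes to localizations, and your \ref{VRUJ:sN2}$\Rightarrow$\ref{VRUJ:N2} replaces the paper's appeal to Lemma \ref{lem:indNormalDescendN-2}, hence to the general formalism of normal ring maps, by the classical facts that a polynomial ring over a normal domain is normal and that $B\otimes_V V(x_1,\ldots,x_n)$ is a localization of $B[x_1,\ldots,x_n]$, where $B$ is the integral closure of $V$ in a finite extension $L$ of $\Frac(V)$; your identification of $B\otimes_V V(x_1,\ldots,x_n)$ as the integral closure of $V(x_1,\ldots,x_n)$ in $L(x_1,\ldots,x_n)$, and the descent of module finiteness along the faithfully flat map $V\to V(x_1,\ldots,x_n)$ (as in \citestacks{03C4}), are both correct. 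So each approach has a merit: the paper's proof is self-contained at its place in the text and avoids the heavier Pr\"ufer machinery, while yours shows that, granted Theorem \ref{thm:smallPrUJ}, the full equivalence including the statements about $V(x_1,\ldots,x_n)$ follows by soft localization-and-descent arguments. One caveat: your closing ``alternative route'' to \ref{VRUJ:sN2}$\Rightarrow$\ref{VRUJ:N2} via Lemma \ref{lem:ValN2} is too vague to stand as a proof, since it would require knowing that defectlessness descends from $V(x_1,\ldots,x_n)$ to $V$, which is not the direction of Kuhlmann's theorem the paper quotes; but your main argument does not rely on it.
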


Since every absolutely integrally closed domain is N-2 by definition, 
Theorem \ref{thm:smallPrUJ} implies that an absolutely integrally closed Pr\"ufer domain
is universally Japanese. 
However, the proof of 
Theorem \ref{thm:smallPrUJ} is in fact done by reduction to the absolutely integrally closed case.
We will show

\begin{Thm}\label{thm:PrufUJ}
    Let $D$ be a Pr\"ufer domain.
    Assume that for all maximal ideals $\fm$ of $D$, $D_\fm$ has divisible value group and $D/\fm$ is perfect.
    Then the following are equivalent.
    \begin{enumerate}
       [label=$(\roman*)$]
       \item\label{PrUJ:maxN2} $D_\fm$ is N-2 for all maximal ideals $\fm$ of $D$.
       \item\label{PrUJ:locN2} $D_\fp$ is N-2 for all prime ideals $\fp$ of $D$.
        \item\label{PrUJ:N2} $D$ is N-2. 
        \item\label{PrUJ:uJ} $D$ is universally Japanese.
    \end{enumerate}
\end{Thm}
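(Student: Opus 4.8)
The plan is to establish the cycle
\ref{PrUJ:uJ} $\Rightarrow$ \ref{PrUJ:N2} $\Rightarrow$ \ref{PrUJ:locN2} $\Rightarrow$ \ref{PrUJ:maxN2} $\Rightarrow$ \ref{PrUJ:uJ}, with essentially all of the work in the final implication. For \ref{PrUJ:uJ} $\Rightarrow$ \ref{PrUJ:N2} I would simply take the finite type domain $A = D$ itself. For \ref{PrUJ:N2} $\Rightarrow$ \ref{PrUJ:locN2} I would use that the N-2 property passes to localizations: the integral closure of $D$ in a finite extension $L$ of $K = \Frac(D)$ commutes with the flat map $D \to D_\fp$, and a finite module remains finite after localization. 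The implication \ref{PrUJ:locN2} $\Rightarrow$ \ref{PrUJ:maxN2} is immediate.

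All the content is in \ref{PrUJ:maxN2} $\Rightarrow$ \ref{PrUJ:uJ}. The first step is to upgrade the local hypothesis: since each $D_\fm$ is an N-2 valuation ring, Theorem~\ref{thm:ValRingUJ} tells us $D_\fm$ is universally Japanese. Hence for any finite type $D$-algebra $A$ that is a domain, writing $\tilde A$ for the normalization of $A$ in $\Frac(A)$ and using that normalization commutes with the localizations $D \to D_\fm$, the module $\tilde A \otimes_D D_\fm$ is finite over $A \otimes_D D_\fm$ for every maximal ideal $\fm$ of $D$. The task is then to promote this pointwise finiteness over $\Spec(D)$ to finiteness of $\tilde A$ over $A$.

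This is where the divisibility and perfectness hypotheses are used. Inside an algebraic closure of $K$ I would consider the finite subextensions and the corresponding integral closures $D_i$ of $D$, with $\colim_i D_i$ the absolute integral closure. Each $D \to D_i$ is faithfully flat, since $D_i$ is torsion-free hence flat over the Pr\"ufer domain $D$ and the extension is integral; moreover, because the value group of each $D_\fm$ is divisible the extensions carry no ramification ($e = 1$ at every maximal ideal), and because $D/\fm$ is perfect the residue extensions are separable. Together with the local N-2 hypothesis these make $D \to D_i$ module-finite locally on $\Spec(D)$ and tame enough that normalization commutes with the base change $A \to A \otimes_D D_i$. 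Using the quasi-compactness of $\Spec(D)$ together with Lemma~\ref{lem:limitN2} to work at a single finite level, I would then descend finiteness of the normalization along the faithfully flat, now globally module-finite map $D \to D_i$ to conclude that $\tilde A$ is finite over $A$.

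The main obstacle is precisely this last descent. Normalization does not commute with arbitrary base change, and over the non-Noetherian base $D$ finiteness of a module is not automatically local on $\Spec(D)$; both hypotheses enter to repair this. Divisibility of the value groups ensures that no ramification is created along $D \to \colim_i D_i$, while perfectness of the residue fields ensures that the residue extensions are separable, so that the fibers of $A \to A \otimes_D D_i$ remain geometrically reduced and the formation of the normalization is preserved. Controlling these two phenomena uniformly across the possibly infinite set $\Max(D)$, and then applying faithfully flat descent of module-finiteness at the single finite level furnished by Lemma~\ref{lem:limitN2}, is the heart of the argument.
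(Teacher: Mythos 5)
Your outer reductions are fine: the three easy implications are exactly as in the paper, and you have correctly identified what the two hypotheses are for (divisible value groups force $e=1$, i.e.\ $\fm C_\fq=\fq C_\fq$ in any integral extension, and perfect residue fields force separable residue extensions). But the heart of \ref{PrUJ:maxN2} $\Rightarrow$ \ref{PrUJ:uJ} is missing, and at the one place where it should appear you assert precisely the statement that is false in general. You claim that local module-finiteness of $D\to D_i$ (equivalently, of the normalization of $A$) can be promoted to global module-finiteness ``using the quasi-compactness of $\Spec(D)$.'' Quasi-compactness only helps once you have, for each $\fm\in\Max(D)$, an \emph{open neighborhood} of $\fm$ (or of $\fm P$, where $P=D[x_1,\dots,x_n]$ is the polynomial ring everything reduces to via Lemma \ref{lem:redtoPoly}) over which the relevant normality or finiteness holds; finiteness at every maximal ideal does not by itself spread to opens, and this promotion genuinely fails: Heitmann's PID (Remark \ref{rem:UJnotlocal}) is N-2 at every maximal ideal but not N-2. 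The paper's proof consists exactly of the spreading-out argument your sketch lacks: for each $\fm$ it builds a finite extension $C(\fm)$ of the given algebra $B$ that is normal at $\fm P$, then uses divisibility and perfectness to show $D\to C(\fm)_g$ is \emph{smooth} near every prime over $\fm P$ (via \texttt{00TF}), hence $C(\fm)_f$ is normal for some $f\notin\fm P$; only then does quasi-compactness of $\{\fm P\}$ (Lemma \ref{lem:GenericsQC}) yield a single finite extension of $B$ normal at all $\fm P$ simultaneously.

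Two further problems. First, Lemma \ref{lem:limitN2} cannot ``furnish a single finite level'': it is an ascent statement (N-2 passes to a filtered colimit along normal, essentially finitely presented transition maps), not a descent statement; descent to a finite level is Lemma \ref{lem:descendFiniteNormalization}, and it requires finiteness of the normalization \emph{upstairs} as input. In your setup that input would be the statement that the absolutely integrally closed colimit $\colim_i D_i$ is universally Japanese — but in the paper that fact (Theorem \ref{thm:ExampleUJ}\ref{ExUJ:absPr}) is deduced \emph{from} Theorem \ref{thm:PrufUJ}, so invoking it here is circular; it is Theorem \ref{thm:smallPrUJ}, not Theorem \ref{thm:PrufUJ}, that is proved by reduction to the absolutely integrally closed case, and that reduction also needs Corollary \ref{cor:unionN2allN2} and Corollary \ref{cor:largeN1smallN1}, which you do not reproduce. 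Second, even after one has normality of (a finite extension of) $B$ at every $\fm P$ and over the generic fiber of $D$, a separate argument is still needed to conclude that $B$ is N-1: normality at those primes is not yet normality. The paper does this with the elementary Serre criterion and the naive dualizing module (Lemma \ref{lem:Serre}, Theorem \ref{thm:S2ify}, Corollary \ref{cor:PruferSerre}); your proposal has no counterpart to this step.
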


As a consequence we see
\begin{Thm}\label{thm:ExampleUJ}
    The following rings are universally Japanese.
    \begin{enumerate}[label=$(\roman*)$]
        \item\label{ExUJ:absVal} An absolutely integrally closed valuation ring.
        \item\label{ExUJ:char0Val} A valuation ring of residue characteristic zero with divisible value group.
        \item\label{ExUJ:absPr} An absolutely integrally closed Pr\"ufer domain.
        \item\label{ExUJ:char0Pr} A Pr\"ufer domain containing the field of rational numbers $\bQ$ whose local rings 
        at maximal ideals 
    have divisible value groups.
        \item\label{ExUJ:absclosVal} The absolute integral closure of a valuation ring.
        \item\label{ExUJ:absclosPr} The absolute integral closure of a Pr\"ufer domain.
        \item\label{ExUJ:absclosDdkd} The absolute integral closure of a Dedekind domain.
        \item\label{ExUJ:absclosNoe1} The absolute integral closure of a Noetherian integral domain of dimension $1$.
    \end{enumerate}
\end{Thm}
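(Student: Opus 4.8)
The plan is to deduce each item from the three main theorems, treating three groups of cases separately: the absolutely integrally closed rings (where N-2 is essentially free), the ``characteristic zero'' rings (where N-2 comes from the valuation-theoretic criterion), and the absolute integral closures (which I reduce to the absolutely integrally closed Pr\"ufer case).

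First I would dispose of the absolutely integrally closed cases \ref{ExUJ:absVal} and \ref{ExUJ:absPr}. An absolutely integrally closed domain is N-2 almost by definition: its fraction field is algebraically closed, so it has no proper finite field extensions, and normality makes its integral closure in its own fraction field equal to itself. Hence \ref{ExUJ:absVal} follows from Theorem \ref{thm:ValRingUJ} and \ref{ExUJ:absPr} from Theorem \ref{thm:smallPrUJ}. For \ref{ExUJ:char0Val}, a valuation ring $V$ of residue characteristic zero has perfect residue field (characteristic zero fields are perfect), so together with divisibility of its value group, Lemma \ref{lem:ValN2} should give that $V$ is N-2; Theorem \ref{thm:ValRingUJ} then gives universally Japanese. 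For \ref{ExUJ:char0Pr}, since $\bQ\subseteq D$ each residue field $D/\fm$ has characteristic zero and is therefore perfect, so the hypotheses of Theorem \ref{thm:PrufUJ} are satisfied; each $D_\fm$ is a valuation ring of residue characteristic zero with divisible value group, hence N-2 by the previous case, so condition \ref{PrUJ:maxN2} holds and Theorem \ref{thm:PrufUJ} yields \ref{PrUJ:uJ}.

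Next I would handle the absolute integral closures \ref{ExUJ:absclosVal}--\ref{ExUJ:absclosNoe1} by reduction to \ref{ExUJ:absPr}. The key input, which is classical, is that the integral closure of a Pr\"ufer domain in an algebraic extension of its fraction field is again a Pr\"ufer domain. Granting this, if $D$ is a valuation ring or a Pr\"ufer domain with $K=\Frac(D)$ and $D^{+}$ is its absolute integral closure in a fixed algebraic closure $\overline{K}$, then $D^{+}$ is a Pr\"ufer domain, it is integrally closed in $\overline{K}=\Frac(D^{+})$, and $\overline{K}$ is algebraically closed, so $D^{+}$ is an absolutely integrally closed Pr\"ufer domain and \ref{ExUJ:absPr} applies; this settles \ref{ExUJ:absclosVal} and \ref{ExUJ:absclosPr}. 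Since every Dedekind domain is Pr\"ufer, \ref{ExUJ:absclosDdkd} follows from \ref{ExUJ:absclosPr}. For \ref{ExUJ:absclosNoe1}, if $D$ is Noetherian of dimension one then by Krull--Akizuki the normalization $D'$ of $D$ in $K$ is a one-dimensional Noetherian normal domain, i.e.\ a Dedekind domain, and the absolute integral closure of $D$ equals that of $D'$ (both consist of the elements of $\overline{K}$ integral over $D$, by transitivity of integrality); thus \ref{ExUJ:absclosNoe1} reduces to \ref{ExUJ:absclosDdkd}.

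The only genuinely non-formal point is case \ref{ExUJ:char0Val}, and it is where I expect the real work to sit. Here I would lean on Lemma \ref{lem:ValN2}: in residue characteristic zero every finite extension of $K$ is separable and every extension of the valuation is defectless (by Ostrowski any defect is a power of the residue characteristic), while divisibility of the value group forces the value group to be unchanged under any finite algebraic extension. Consequently the integral closure of $V$ in a finite extension grows only through a finite separable residue field extension, and one must check that this is precisely the configuration that Lemma \ref{lem:ValN2} certifies as N-2; since $V$ is non-Noetherian in general, one cannot simply invoke the ``submodule of a finite free module'' argument, and the divisibility hypothesis is exactly what compensates for this. Once \ref{ExUJ:char0Val} is in hand, all remaining items are a matter of invoking Theorems \ref{thm:smallPrUJ}, \ref{thm:ValRingUJ}, and \ref{thm:PrufUJ} together with the permanence of the Pr\"ufer property under integral closure in algebraic extensions.
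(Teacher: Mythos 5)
Most of your proposal matches the paper's own proof: items \ref{ExUJ:absVal} and \ref{ExUJ:char0Val} are N-2 by Lemma \ref{lem:ValN2} (with defectlessness in residue characteristic zero as the key input --- your Ostrowski argument is the same fact the paper cites from Kuhlmann; note that it is defectlessness, not perfection of the residue field, that Lemma \ref{lem:ValN2} needs, which your final paragraph gets right even though your first paragraph misstates it), Theorems \ref{thm:ValRingUJ} and \ref{thm:PrufUJ} then upgrade N-2 to universally Japanese; item \ref{ExUJ:char0Pr} follows from Theorem \ref{thm:PrufUJ}; the rings in \ref{ExUJ:absclosVal}--\ref{ExUJ:absclosDdkd} are absolutely integrally closed Pr\"ufer domains by permanence of the Pr\"ufer property under integral closure in algebraic extensions; and \ref{ExUJ:absclosNoe1} reduces to \ref{ExUJ:absclosDdkd} by Krull--Akizuki. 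All of this is exactly the paper's route.

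There is, however, one genuine flaw: your derivation of item \ref{ExUJ:absPr} from Theorem \ref{thm:smallPrUJ} is circular within the paper's logical structure. The paper's proof of Theorem \ref{thm:smallPrUJ} itself invokes Theorem \ref{thm:ExampleUJ}\ref{ExUJ:absPr}: given an N-2 Pr\"ufer domain $D$, it passes to the union $\bigcup_i D_i$ of the integral closures of $D$ in all finite extensions of $\Frac(D)$, observes that this union is an absolutely integrally closed Pr\"ufer domain, cites \ref{ExUJ:absPr} to conclude that it is universally Japanese, and then descends back to $D$ via Corollary \ref{cor:unionN2allN2}. The introduction flags exactly this trap: although Theorem \ref{thm:smallPrUJ} formally implies \ref{ExUJ:absPr}, ``the proof of Theorem \ref{thm:smallPrUJ} is in fact done by reduction to the absolutely integrally closed case.'' The repair is cheap and is what the paper does: deduce \ref{ExUJ:absPr} from Theorem \ref{thm:PrufUJ} instead, in parallel with your argument for \ref{ExUJ:char0Pr}. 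If $D$ is an absolutely integrally closed Pr\"ufer domain, then each $D_\fm$ is an absolutely integrally closed valuation ring, so its value group is divisible and its residue field is algebraically closed (hence perfect), and $D_\fm$ is N-2 since $\Frac(D)$ has no proper finite extensions; thus condition \ref{PrUJ:maxN2} of Theorem \ref{thm:PrufUJ} holds and $D$ is universally Japanese. With this change, your deductions of \ref{ExUJ:absclosVal}--\ref{ExUJ:absclosNoe1} from \ref{ExUJ:absPr} go through unchanged.
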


\begin{Rem}\label{rem:UJnotlocal}
The equivalence of \ref{PrUJ:maxN2} and \ref{PrUJ:N2} in Theorem \ref{thm:PrufUJ}
does not hold for general Pr\"ufer domains.
     In fact, it does not even hold for general PIDs.
     See \cite[Proposition 2.5]{Heitmann-PID-locEx-notEx}.
     For more discussion, see \S\ref{sec:ex}.
\end{Rem}

\begin{Rem}
    The ring $\cO(\bC)$ of complex entire functions is not universally Japanese.
    Indeed, for a maximal ideal $\fm$ not of the form $(z-a)$ for some $a\in\bC$,
    the value group of $\cO(\bC)_\fm$ is a non-principal ultraproduct of $\bZ$.
    Therefore $\cO(\bC)_\fm$ is not N-2 by Lemma \ref{lem:ValN2},
    same for $\cO(\bC)$ by \citestacks{032G}.
\end{Rem}

We conclude by reminding the reader that there are, in fact, many absolutely integrally closed domains that are not universally Japanese.
\begin{Thm}\label{thm:absIntNotUJ}
    The absolute integral closure of a locally Nagata Noetherian integral domain containing $\bQ$ and of dimension $\geq 2$ is not universally Japanese.
    The absolute integral closure of a locally Nagata Noetherian integral domain containing $\bZ$ and of dimension $\geq 3$ is not universally Japanese.
\end{Thm}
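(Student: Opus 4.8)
The plan is to produce, for $R=A^+$, an explicit finite type $R$-algebra that is a domain but fails to be N-1; by definition this already shows $R$ is not universally Japanese. Two reductions come first. For the statement about rings containing $\bZ$, I would invert every nonzero integer: $A_\bQ:=A\otimes_\bZ\bQ$ is again a locally Nagata Noetherian domain, now containing $\bQ$, of dimension $\geq\dim A-1\geq 2$, and its absolute integral closure is the localization $A^+[1/n:n\geq 1]=(A_\bQ)^+$. Since being universally Japanese passes to localizations (if $C$ is a finite type $S^{-1}R$-algebra domain, spread it out to a finite type $R$-algebra domain $C_0$ with $S^{-1}C_0=C$, apply N-1 to $C_0$, and localize using that integral closure commutes with localization), it suffices to treat $A\supseteq\bQ$, $\dim A\geq 2$. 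The drop by one in dimension is forced and is exactly what the hypotheses encode: in positive and mixed characteristic $A^+$ is a big Cohen--Macaulay algebra, after Hochster--Huneke and André, so the construction below cannot succeed in the arithmetic direction and one must pass to the characteristic-zero generic fibre, whose dimension is $\geq 2$ only when $\dim A\geq 3$. A second, routine reduction localizes $A$ at a prime of height $\geq 2$, so we may assume $A$ is local with a system of parameters beginning with two elements $x,y$.

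For the construction, set $a=x$, a nonzero nonunit of $R$, and let $B=R[u,w]/(uw-a)$; equivalently, after the change of variables $u=z-t$, $w=z+t$ (legitimate as $2$ is invertible), $B$ is the double cover $R[t,z]/(z^2-t^2+a)$. Then $B$ is of finite type over $R$, and it is a domain because $uw-a$ is linear in $u$ over the domain $R[w]$, so $B\cong R[w,-a/w]\subseteq\Frac(R)(w)$. I claim $B$ is not N-1. Inverting $w$ gives $B[1/w]=R[w,1/w]$, which is normal, so non-normality is concentrated along $V(w)=V(w,a)$; moreover the singular locus $V(u,w,a)$ has codimension two, so $B$ is regular in codimension one. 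Thus the only obstruction to $B=B^\nu$ is a failure of Serre's condition $(S_2)$, and the crux is that the $(S_2)$-ification, which here coincides with the integral closure, is not module-finite over $B$.

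Proving this non-finiteness is the main obstacle, and it is precisely where characteristic zero is used. The mechanism has two ingredients. First, because $\dim R\geq 2$ and $R$ is not a Krull domain (the value groups attached to its height-one primes are divisible rather than discrete), the closed set $V(a)\subseteq\Spec R$ has infinitely many irreducible components: infinitely many height-one primes contain $a$, corresponding to the infinitely many rank-one valuations of $\Frac(R)$ whose center contains $x$. Hence the locus along which $B$ fails to be normal has infinitely many irreducible components. Second, in characteristic zero $A^+$ is very far from Cohen--Macaulay, so its depth along these centers is too small and the local cohomology governing the $(S_2)$-ification acquires independent contributions at infinitely many of them. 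I would make the independence precise either by exhibiting a strictly increasing chain $B=B_0\subsetneq B_1\subsetneq B_2\subsetneq\cdots$ inside $B^\nu$, or by a direct $\depth$/length computation at the generic points of the components of $V(a)$, showing that no finite $B$-submodule of $B^\nu$ can contain the normalizing elements attached to infinitely many distinct components. The contrast with the big Cohen--Macaulay cases is the sanity check that the hypotheses are being used correctly: there $(S_2)$ holds through $B$, so $B$ is normal and the construction genuinely fails, which is exactly why dimension $\geq 2$ suffices over $\bQ$ while dimension $\geq 3$ is needed over $\bZ$.

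Finally, the locally Nagata hypothesis is what makes everything run: it guarantees that the finite normal extensions of $A$ assembling $A^+$ are module-finite, so $A^+$ is a filtered colimit of finite normal $A$-algebras, which is what keeps the valuation- and dimension-theoretic facts above available (finiteness at each finite stage, the behaviour of $V(a)$, and the identification of the residue data feeding Lemma \ref{lem:ValN2}) and what makes the two reductions preserve the hypotheses. In spirit the argument mirrors Example \ref{exam:infVarsNotUJ}: there non-finiteness of a normalization came from infinitely many independent coordinate directions, whereas here it comes from the infinitely many independent height-one directions through $V(a)$ that open up once $\dim A^+\geq 2$ and big Cohen--Macaulayness is unavailable. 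I do not expect to need Theorem \ref{thm:smallPrUJ} for this direction, since the witness $B$ is not Pr\"ufer; its role, if any, would be as an alternative route, by instead realizing a locally-N-2-but-not-N-2 Pr\"ufer algebra over $R$.
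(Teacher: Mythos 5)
There is a fatal gap, and it is located exactly where your argument leans hardest. In the essential case where, after your reductions, $A$ is local of dimension exactly $2$ (e.g.\ $A=\bC[X,Y]_{(X,Y)}$, a case the theorem must cover and the one your own localization step lands on), your witness $B=R[u,w]/(uw-a)$ with $R=A^+$, $a=x$ is in fact \emph{normal}, hence trivially N-1, so it witnesses nothing. Indeed, since $A$ is Nagata, $R=A^+$ is the filtered union of finite normal extensions $R_i$ of $A$; each $R_i$ is a $2$-dimensional normal Noetherian domain, hence Cohen--Macaulay by Serre's criterion, so $B_i:=R_i[u,w]/(uw-x)$ is a hypersurface in a Cohen--Macaulay ring and therefore $(S_2)$, while its singular locus is contained in $V(u,w)\cup V(\fm_i B_i)$, which has codimension $2$, so $B_i$ is $(R_1)$ and thus normal. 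A filtered union of normal domains along injective maps is normal, so $B=\bigcup_i B_i$ is normal. The root cause is that your key premise --- ``in characteristic zero $A^+$ is very far from Cohen--Macaulay'' --- is false in dimension $2$: precisely because the finite normal stages $R_i$ are Cohen--Macaulay, the parameters $x,y$ form a regular sequence on $R^+$ in \emph{every} characteristic; non-Cohen--Macaulayness of $R^+$ in characteristic zero only begins in dimension $\geq 3$. So the depth/local-cohomology mechanism you invoke does not exist here, and the infinitely many components of $V(a)$ are no obstruction to normality. Your argument is also incomplete on its own terms: the crucial non-finiteness of the normalization is never proved, only deferred to ``I would make the independence precise either by\ldots or by\ldots''.

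The characteristic-zero input the paper actually uses is different and, unlike depth, survives in dimension $2$: normal domains of characteristic zero are splinters, so $\fm R^+\cap R'=\fm R'$ for every normal finite extension $R'$ of $R$ inside $R^+$. The paper exploits this to show that the integral closure $\overline{I}$ of the \emph{ideal} $I=(x,y)R^+$ is not finitely generated: if it were, one could descend it to a finite normal stage and reduce to the case $\overline{I}\subseteq\fm R^+$; but $t=\sqrt{x^2-y^2}$ lies in $\overline{I}$ (as $t^2\in I^2$) and in a suitable normal finite extension $R''=R[t]$, so the splinter property would force $t\in\fm R^+\cap R''=\fm R''$, which is false since $R''=R\oplus Rt$ as an $R$-module. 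The Rees algebra $S=R^+[IT]=\bigoplus_n I^nT^n$, whose normalization is $\bigoplus_n\overline{I^n}T^n$, then converts non-finite-generation of $\overline{I}$ into failure of N-1 for a finite type $R^+$-algebra. Note that this mechanism is orthogonal to Cohen--Macaulayness: it works even though $R^+$ \emph{is} a big Cohen--Macaulay algebra in dimension $2$, which is exactly why your dichotomy between characteristic zero and characteristic $p$ (big Cohen--Macaulay versus not) cannot be the dividing line in the minimal case. Your opening reductions (inverting integers, localizing at a prime of height $2$) are fine and parallel the paper's, but the witness and the engine of the proof need to be replaced along the lines above.
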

For example, the absolute integral closure of $\bC[X,Y]$, $\bZ[X,Y]$, or $\bC[[X,Y]]$ is not universally Japanese.

As a side note, many such rings are also not coherent \cite{P-coherent-abs-int-clos}.
Coherence is a key input in our argument, see \S\ref{subsec:S2ify}.
\\

Our proof strategy is as follows.
In \S\ref{subsec:valN2},
after collecting information from \cite{Bourbaki-CA-5-7},
we characterize N-2 valuation rings.
Together with \cite{K-DVR-defectless},
we see $V(x_1,\ldots,x_n)$ is N-2 for every N-2 valuation ring $V$.
Using a naive variant of Serre's criterion for normality (\S\ref{subsec:Serre}) and of the dualizing module (\S\ref{subsec:S2ify}),
we can reduce the problem for $V[x_1,\ldots,x_n]$ to $V(x_1,\ldots,x_n)$ (Corollary \ref{cor:PruferSerre})
and prove Theorem \ref{thm:ValRingUJ}
relatively easily.
For a general Pr\"ufer domain $D$,
the same reduction process works,
but to control all $D_\fm(x_1,\ldots,x_n)$ simultaneously is not possible in general (Remark \ref{rem:UJnotlocal}).
In the situation of
Theorem \ref{thm:PrufUJ},
it is possible,
and Theorems \ref{thm:ExampleUJ} and \ref{thm:smallPrUJ} follow.

The proof of Theorem \ref{thm:absIntNotUJ} is on its own and not related to the materials on Pr\"ufer domains.

In \S\ref{sec:ex}, we examine the failure of the equivalence of \ref{PrUJ:maxN2} and \ref{PrUJ:N2} in Theorem \ref{thm:PrufUJ} in arbitrary characteristic,
and supplement that with another situation in which the equivalence does hold.
This tells us the assumptions in Theorem \ref{thm:PrufUJ} may be weakened but cannot be removed.
\\

\noindent\textsc{Acknowledgment}. We thank Linquan Ma for informing the author of this problem and for helpful discussions.
We thank the anonymous referee for detailed comments that help improve the exposition.
The author was supported by an AMS-Simons Travel Grant.

\section{Preparations}
\subsection{Remarks on ascent and descent of integral closure}
\begin{Def}
    A ring map $A\to B$ is \emph{normal} if it is flat and all its fibers are geometrically normal,
    that is, $B\otimes_A l$ is always normal where $l$ is a finite extension of $\kappa(\fp)=A_\fp/\fp A_\fp$ for some $\fp\in\Spec(A)$.
\end{Def}
Note that a normal ring map, or any flat ring map, between integral domains, is injective.

For a property \textbf{P} of ring maps, a ring map $A\to B$ is \emph{essentially \textbf{P}} if $B=S^{-1}C$ where $A\to C$ is \textbf{P}.
A ring map $A\to B$ is of finite presentation if $B$ is isomorphic to a polynomial $A$-algebra of finitely many variables modulo a finitely generated ideal.
\begin{Lem}
    \label{lem:basechangeNormal}
    Let $A\to B$ be a normal ring map essentially of finite presentation.
    Let $C$ be an $A$-algebra.
    Then $C\to C\otimes_A B$ is a normal ring map essentially of finite presentation.
\end{Lem}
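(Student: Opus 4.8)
The plan is to verify the three ingredients of the conclusion separately, since only one of them carries real content. \emph{Flatness and essential finite presentation.} Flatness is stable under base change, so $C\to C\otimes_A B$ is flat. For the finiteness part, write $B=S^{-1}B_0$ with $A\to B_0$ of finite presentation. Letting $\bar S$ be the image of $S$ in $C\otimes_A B_0$, we have $C\otimes_A B\cong \bar S^{-1}(C\otimes_A B_0)$, and $C\to C\otimes_A B_0$ is of finite presentation because finite presentation is stable under base change; hence $C\to C\otimes_A B$ is essentially of finite presentation.

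The substance is in the fibers. Fix $\fq\in\Spec(C)$ lying over $\fp\in\Spec(A)$, and set $k=\kappa(\fp)$, $K=\kappa(\fq)$, and $F=B\otimes_A k$; note that $A\to K$ factors through $k$ since $\fq$ contracts to $\fp$. By transitivity of the tensor product the fiber of $C\to C\otimes_A B$ over $\fq$ is
\[
(C\otimes_A B)\otimes_C K\;\cong\; B\otimes_A K\;\cong\; F\otimes_k K.
\]
I must show $F\otimes_k K$ is geometrically normal over $K$, that is, $(F\otimes_k K)\otimes_K l\cong F\otimes_k l$ is normal for every finite extension $l/K$. The hypothesis provides only that $F$ is geometrically normal over $k$, i.e. that $F\otimes_k m$ is normal for every \emph{finite} extension $m/k$.

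The crux is to upgrade this from finite extensions of $k$ to arbitrary ones. I would invoke the standard equivalence of the two formulations of geometric normality: if $F\otimes_k m$ is normal for every finite extension $m/k$, then $F\otimes_k k'$ is normal for every field extension $k'/k$ (see \cite{stacks}). Granting this, the conclusion is immediate, for any finite extension $l/K$ is in particular a field extension of $k$, whence $F\otimes_k l$ is normal; so every fiber of $C\to C\otimes_A B$ is geometrically normal and the lemma follows.

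I expect the upgrade to be the main obstacle: because the ambient definition tests only finite field extensions, while the extension $l/K$ occurring in the base-changed fiber is finite over $K$ but typically of positive transcendence degree over $k$, one genuinely needs the equivalence of the finite-extension and arbitrary-extension formulations of geometric normality rather than a naive application of the hypothesis. Were I to prove that equivalence rather than cite it, I would reduce an arbitrary extension to a filtered colimit of finitely generated subextensions, factor each finitely generated extension as a finite extension of a purely transcendental one (treating the transcendental part via the stability of normality under polynomial rings and localization), and then control the colimit step; I would isolate this as a separate lemma so that the present argument stays clean.
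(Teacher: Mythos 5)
Your proof is correct and is, in substance, the paper's own: the paper handles this lemma by citing \cite[Proposition 6.8.2]{EGA4_2} for the Noetherian case and noting that the same proof works in general, and that proof is exactly your argument --- base-change stability of flatness and essential finite presentation, identification of the fiber over $\fq$ as $F\otimes_{\kappa(\fp)}\kappa(\fq)$, and the equivalence of the finite-extension and arbitrary-extension tests for geometric normality (e.g. \citestacks{037Z}). Note also that the fiber $F=B\otimes_A\kappa(\fp)$ is essentially of finite type over the field $\kappa(\fp)$, hence Noetherian, so the equivalence you invoke is available in its classical form and the non-Noetherian worries of your closing paragraph do not actually arise here.
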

\begin{proof}
    The Noetherian case is contained in \cite[Proposition 6.8.2]{EGA4_2},
    whereas the general case follows from the same proof.
\end{proof}

\begin{Lem}[{\cite[Proposition 11.3.13]{EGA4_3}}]\label{lem:ascendNormal}
    Let $A\to B$ be a normal ring map essentially of finite presentation (for example essentially smooth).
    Assume $A$ is normal.
    Then $B$ is normal.
\end{Lem}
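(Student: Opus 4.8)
The plan is to verify normality of $B$ stalk by stalk and to reduce the problem to an ascent statement for a flat local homomorphism with normal closed fiber, which I would then settle with Serre's criterion. Since $B$ is normal exactly when $B_\fq$ is a normal domain for every $\fq\in\Spec(B)$, I fix such a $\fq$ and put $\fp=A\cap\fq$. The localized map $A_\fp\to B_\fq$ is again flat, local, and essentially of finite presentation, the ring $A_\fp$ is a normal local domain because $A$ is normal, and the closed fiber $B_\fq\otimes_{A_\fp}\kappa(\fp)$ is a localization of the geometrically normal fiber $B\otimes_A\kappa(\fp)$, hence normal. Replacing $A$ by $A_\fp$ and $B$ by $B_\fq$, I am left to prove: if $(A,\fm)$ is a normal local domain and $(A,\fm)\to(B,\fn)$ is a flat local homomorphism, essentially of finite presentation, with $B/\fm B$ normal, then $B$ is a normal domain.

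Here I would invoke Serre's criterion, namely that $B$ is normal once it satisfies $(R_1)$ and $(S_2)$, and then conclude that $B$ is a domain since it is local. Both Serre conditions ascend along the flat map $A\to B$. For $(S_2)$ I would use the additivity formulas $\depth B_{\fq'}=\depth A_{\fp'}+\depth(B_{\fq'}/\fp'B_{\fq'})$ and $\dim B_{\fq'}=\dim A_{\fp'}+\dim(B_{\fq'}/\fp'B_{\fq'})$ valid for a flat local map, combining the $(S_2)$ property of $A$ with that of the fibers, which holds because the fibers are geometrically normal. For $(R_1)$ I would use that a flat local homomorphism with geometrically regular fibers preserves regularity, so that every localization of $B$ at a prime of height at most $1$ is regular, drawing on the $(R_1)$ property of $A$ and of the fibers. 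The essentially smooth case is subsumed, since smooth maps have geometrically regular, hence geometrically normal, fibers.

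The main obstacle is that the depth and dimension additivity, as well as the ascent of regularity, are classical only under Noetherian hypotheses, while here neither $A$ nor $B$ need be Noetherian. This is precisely where the hypothesis \emph{essentially of finite presentation} is meant to be used: I would write $A$ as the filtered colimit of its finitely generated $\bZ$-subalgebras $A_\lambda$, descend the finite-presentation data of $B$ to finite type, hence Noetherian, algebras $B_\lambda$ over $A_\lambda$ with $B=\colim B_\lambda$, and then transport flatness, normality of the relevant fibers, and the Serre conditions through the colimit, so that the needed ascent can be checked on the Noetherian models $A_\lambda\to B_\lambda$. The delicate point is that the normality of $A$ does not pass to the approximations $A_\lambda$; since, however, $A$ enters only through $(R_1)$ and $(S_2)$ at the local rings that actually occur, the descent can be arranged to require only the corresponding properties of the models. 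Organizing this matching of primes, fibers, and Serre conditions across the colimit is, I expect, the principal technical burden of the argument.
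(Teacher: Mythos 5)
Your Noetherian core --- Serre's criterion together with the flat local ascent of $(R_1)$ and $(S_2)$ --- is the standard argument and is fine as a statement about Noetherian rings; note that the paper itself offers no proof here, it simply cites EGA IV, 11.3.13, so the only question is whether your argument closes. It does not: the gap is exactly at the point you flag as "delicate," and your proposed resolution fails. The approximations $A_\lambda$ (finite type over $\bZ$) of a normal ring $A$ need not be normal, and one cannot arrange the argument to use only the Serre conditions "at the local rings that actually occur": the contractions to $A_\lambda$ of primes of $A$ can already violate $(R_1)$. Concretely, take $A=k[t]$ and $A_\lambda=k[t^2,t^3]$; the prime $(t)\subseteq A$ contracts to the height-one prime $\fp_\lambda=(t^2,t^3)$, at which $A_\lambda$ is not regular. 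So every finite stage of the approximation can fail Serre's criterion even though the colimit is normal, and there is nothing to transport through the colimit. Nor can you fall back on the Serre conditions of $A$ and $B$ themselves: they are not Noetherian, and depth/regularity arguments are not available for them --- that is the whole difficulty of the lemma.

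The missing idea, and essentially what the proof behind the EGA citation does, is to repair the models rather than the bookkeeping. After localizing (your first reduction is fine), $A$ is a normal local \emph{domain}, hence it is the filtered union of the normalizations $A'_\lambda$ of its finite-type $\bZ$-subalgebras $A_\lambda$: each $A'_\lambda$ is finite over $A_\lambda$ because finite-type $\bZ$-algebras are Nagata, so $A'_\lambda$ is a Noetherian normal domain, and $A'_\lambda\subseteq A$ precisely because $A$ is integrally closed in its fraction field. Combine this with the spreading-out step you already have in mind: write $B=A\otimes_{A_0}B_0$ with $A_0$ Noetherian and $A_0\to B_0$ flat, of finite presentation, with geometrically normal fibers. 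Then $B=\colim_\lambda\bigl(A'_\lambda\otimes_{A_0}B_0\bigr)$, each stage is a normal Noetherian ring by the ascent theorem you quoted (applied over the normal base $A'_\lambda$), the transition maps are injective because $B_0$ is $A_0$-flat, and normality then passes to the filtered colimit. Without the normalized models (or an equivalent device) the approximation scheme does not close; it is worth noting that the genuinely non-formal input is Nagata's finiteness theorem for finite-type $\bZ$-algebras --- the very kind of finiteness-of-integral-closure statement this paper is about.
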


\begin{Cor}\label{cor:NormalizationBaseChange}
Let $A\to B$ be a normal ring map essentially of finite presentation.
Let $C$ be an $A$-algebra,
and let $C'$ be the integral closure of $A$ in $C$.
If $C'$ is normal, then $B\otimes_A C'$ is the integral closure of $B$ in $B\otimes_A C$.
\end{Cor}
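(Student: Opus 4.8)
The plan is to first isolate the one nontrivial inclusion and then reduce everything to the Noetherian case, where the statement is classical. Write $R=B\otimes_A C'$ and let $\overline{B}$ denote the integral closure of $B$ in $T=B\otimes_A C$. Since $C'$ is integral over $A$, the ring $R$ is integral over $B$, and since $A\to B$ is flat the natural map $R\to T$ is injective; thus $R\subseteq\overline{B}\subseteq T$ and $\overline{B}$ is integral over $R$. Consequently $\overline{B}=R$ as soon as we know that $R$ is integrally closed in $T$, so this is all that remains to prove. To set it up, apply Lemma \ref{lem:basechangeNormal} to the base change of $A\to B$ along $A\to C'$: the map $C'\to C'\otimes_A B=R$ is normal and essentially of finite presentation. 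As $C'$ is normal by hypothesis, Lemma \ref{lem:ascendNormal} shows that $R$ is normal. This is the only place the hypothesis on $C'$ enters; it pins down the candidate $R$ as a normal ring.

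Next I would reduce to a situation where the integral closure being compared is trivial. Replacing $A$ by $C'$ and $B$ by $R$ is legitimate, since $C'\to R$ is again normal and essentially of finite presentation, $R\otimes_{C'}C=B\otimes_A C$, and the integral closure of $C'$ in $C$ is $C'$ itself (because $C'$ is integrally closed in $C$). We are thereby reduced to showing: if $A$ is normal and integrally closed in $C$, then $B$ is integrally closed in $B\otimes_A C$. Because integral closure commutes with localization and being integrally closed can be tested prime by prime on $B$, I may assume $B$ is a normal local domain and correspondingly $A$ a normal local domain integrally closed in $C$. Because integral closure commutes with filtered colimits, and because $A$ stays integrally closed in every $A$-subalgebra of $C$, I may write $C=\colim_i C_i$ over its finite type $A$-subalgebras and assume $C$ is of finite type over $A$.

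Finally I would pass to the Noetherian case by approximation. Writing $A$ as the filtered colimit of its finitely generated subrings, the data $A\to B$ (essentially of finite presentation), $A\to C$ (of finite type), together with flatness and geometric normality of the fibers, all descend to a Noetherian subring $A_0$ with descended $B_0,C_0$ and $A_0\to B_0$ normal; this is the same limit machinery that proves Lemma \ref{lem:basechangeNormal} in general. Over the Noetherian ring $A_0$, the compatibility of the formation of integral closure with the normal base change $A_0\to B_0$ is classical (cf. \cite[\S6]{EGA4_2}): the integral closure of $B_0$ in $B_0\otimes_{A_0}C_0$ is $B_0\otimes_{A_0}C_0'$, where $C_0'$ is the integral closure of $A_0$ in $C_0$. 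Passing back to the colimit and using that the integral closure of $A$ in $C$ is $A$, one finds that $B$ is integrally closed in $B\otimes_A C$, as required. The main obstacle is precisely this Noetherian core together with the descent and ascent of geometric normality of the fibers through the limit. It is here that the \emph{normal}, rather than merely flat, hypothesis on $A\to B$ is indispensable: for a merely flat base change an inseparable fiber can manufacture new integral elements, so that flatness alone would not suffice to force $R$ to be integrally closed in $T$.
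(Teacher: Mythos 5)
Your proposal is correct, and it parts ways with the paper's proof at exactly the decisive step, so the comparison is worth recording. Both arguments open identically: flatness of $A\to B$ makes $R:=B\otimes_A C'$ a subring of $T:=B\otimes_A C$ integral over $B$, and Lemmas \ref{lem:basechangeNormal} and \ref{lem:ascendNormal} show $R$ is normal, reducing everything to the claim that $R$ is integrally closed in $T$. The paper stops here (``normal, therefore the integral closure''), while you correctly decline to treat that as a formal implication: a normal ring need not be integrally closed in an arbitrary extension ring (the diagonal $k\subseteq k\times k$ already fails, since $(1,0)$ is integral over $k$; normality only gives integral closedness inside the total ring of fractions). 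You then supply an actual argument: reduce to $A=C'$ normal and integrally closed in $C$, localize on $B$, reduce to $C$ of finite type over $A$, and descend to a Noetherian base where compatibility of integral closure with normal base change is classical; the sharp reference is \cite[6.14.4]{EGA4_2}, and invoking the limit machinery (descent of flatness and of geometric normality of fibers) by analogy with Lemma \ref{lem:basechangeNormal} is consistent with the paper's own level of detail, since that lemma is itself proved by saying the general case ``follows from the same proof'' as the Noetherian one. As for what each approach buys: the paper's one-line conclusion is sound in both places the corollary is actually applied (Lemmas \ref{lem:limitN2} and \ref{lem:indNormalDescendN-2}), because there $C$ is the fraction field of $C'$, so $T$ is a localization of $R$ at a multiplicative set of nonzerodivisors (by flatness of $R$ over $C'$) and hence lies in the total ring of fractions of $R$, where normality of $R$ does finish the proof; but for an arbitrary $A$-algebra $C$, as in the statement, some version of your longer argument is what is really needed. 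In short, you do explicitly the work that the paper's final ``therefore'' compresses, and your route is the one that covers the corollary in its stated generality.
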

\begin{proof}
    As $A\to B$ is flat, $B\otimes_A C'$ is a subring of $B\otimes_A C$ integral over $B$.
    By Lemma \ref{lem:basechangeNormal},
    $C'\to B\otimes_A C'$ is a normal ring map essentially of finite presentation.
    By Lemma \ref{lem:ascendNormal},
    $B\otimes_A C'$ is normal, therefore the integral closure.
\end{proof}

The next result is about ascent of integral closure.
\begin{Lem}\label{lem:limitN2}
    Let $(A_i)_i$ be a direct system of integral domains and let $A=\colim_i A_i$.
    If the transition maps $A_i\to A_j$ are normal and
    essentially of finite presentation,
    and all $A_i$ are N-2,
    then $A$ is N-2.
\end{Lem}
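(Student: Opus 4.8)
The plan is to fix a finite field extension $L/K$ of the fraction field $K=\Frac(A)$ and show that the integral closure $B$ of $A$ in $L$ is a finite $A$-module; by definition this gives that $A$ is N-2. Since each transition map is normal, hence flat, hence injective (between domains), we have $A=\colim_i A_i$ with each $A_i\hookrightarrow A$ and $K=\colim_i K_i$, where $K_i=\Frac(A_i)$. Writing $L=K[\theta_1,\dots,\theta_r]$ with the $\theta_s$ integral over $K$, the finitely many coefficients of their minimal polynomials lie in some $K_{i_0}$. First I would set $L_{i_0}=K_{i_0}[\theta_1,\dots,\theta_r]\subseteq L$; this is a finite $K_{i_0}$-algebra and a subring of the field $L$, hence a finite field extension of $K_{i_0}$, and $L=K\cdot L_{i_0}$.

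Next, since $A_{i_0}$ is N-2, the integral closure $B_{i_0}$ of $A_{i_0}$ in the finite extension $L_{i_0}$ of $K_{i_0}$ is finite over $A_{i_0}$, and it is normal. For each $j\ge i_0$ the transition map $A_{i_0}\to A_j$ is normal and essentially of finite presentation, so Corollary \ref{cor:NormalizationBaseChange} (applied with $C=L_{i_0}$ and $C'=B_{i_0}$) shows that $A_j\otimes_{A_{i_0}}B_{i_0}$ is the integral closure of $A_j$ in $A_j\otimes_{A_{i_0}}L_{i_0}$, and this is finite over $A_j$. Passing to the colimit over $j$ and using that integral closure commutes with filtered colimits (an element integral over $A$ already satisfies a monic equation descending to a finite level), I obtain that $N:=A\otimes_{A_{i_0}}B_{i_0}$ is the integral closure of $A$ in $M:=A\otimes_{A_{i_0}}L_{i_0}$; it is finite over $A$ because it is the base change of the finite $A_{i_0}$-module $B_{i_0}$.

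It remains to extract $B$ from $N$. I would argue that $M$ is a normal ring: each $A_j\otimes_{A_{i_0}}L_{i_0}$ is normal by Lemmas \ref{lem:basechangeNormal} and \ref{lem:ascendNormal} (as $L_{i_0}$ is a field and $L_{i_0}\to A_j\otimes_{A_{i_0}}L_{i_0}$ is normal essentially of finite presentation), and a filtered colimit of normal rings along flat transition maps is normal (check at each prime, where it becomes a filtered colimit of normal local domains along faithfully flat maps). Now $M$ is module-finite and free over the localization $K'=(A_{i_0}\setminus\{0\})^{-1}A$ of $A$, whose fraction field is $K$; hence $M$ embeds into $M\otimes_{K'}K=K\otimes_{K_{i_0}}L_{i_0}$, a finite product of finite field extensions of $K$, so $M$ has finitely many minimal primes and, being normal, decomposes as a finite product $M=\prod_t M^{(t)}$ of normal domains. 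The multiplication map $M\to L$ factors through the factor $M^{(1)}$ whose fraction field is $L$ (this is the factor of $M\otimes_{K'}K$ equal to $L$, since $L=K\cdot L_{i_0}$). Correspondingly $N=\prod_t N^{(t)}$ with $N^{(t)}$ the integral closure of $A$ in $M^{(t)}$; as $M^{(1)}$ is a normal domain with fraction field $L$, the factor $N^{(1)}$ equals the integral closure $B$ of $A$ in $L$. Thus $B$ is a direct factor of the finite $A$-module $N$, hence finite over $A$.

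The main obstacle is this last identification: the descended extension $L_{i_0}\otimes_{K_{i_0}}K$ need not be a field (it can split into several conjugate factors), so one cannot simply say $L=L_{i_0}\otimes_{K_{i_0}}K$ and read off the integral closure; the product decomposition of the normal ring $M$ is what lets me isolate the single factor corresponding to $L$. A secondary point requiring care is the normality of the colimit $M$, which is precisely what makes the decomposition into normal domains available.
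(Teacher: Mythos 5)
Your proof is correct, and its skeleton---descend to a finite stage $i_0$, invoke the N-2 hypothesis there, and push the resulting finite integral closure up along $A_{i_0}\to A$ via Corollary \ref{cor:NormalizationBaseChange} and a colimit argument---is the same as the paper's. The genuine difference is \emph{how} you descend the finite extension $L/K$, and that choice is what creates the obstacle you then have to clear. You descend by picking generators and forming the subfield $L_{i_0}=K_{i_0}[\theta_1,\dots,\theta_r]\subseteq L$; since $L_{i_0}\otimes_{K_{i_0}}K$ may then fail to be a field, you must show $M=A\otimes_{A_{i_0}}L_{i_0}$ is normal, split it as a finite product of normal domains (\citestacks{030C}), and extract the factor with fraction field $L$. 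The paper instead descends $L$ as an abstract finite, hence finitely presented, $K$-algebra: by the standard limit theorems there is a finitely presented $K_i$-algebra $M_i$ with $M_i\otimes_{K_i}K\cong L$, and faithful flatness of $K_i\to K$ forces $M_i$ to inject into $L$, so $M_i$ is a domain, finite over $K_i$ for $i$ large, hence a finite field extension whose base change is $L$ \emph{on the nose}. With that descent the entire third paragraph of your argument evaporates and the proof takes three lines: the integral closure $C_i$ of $A_i$ in $M_i$ is finite by N-2, and $C_i\otimes_{A_i}A$ is the integral closure of $A$ in $L$. What your route buys is explicitness (no appeal to descent of finitely presented algebras through a filtered colimit); what it costs is the normality-of-$M$ and factor-extraction steps. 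Those steps are sound as written---and they in fact mirror the product-of-normal-domains technique the paper itself uses in Lemma \ref{lem:indNormalDescendN-2} and Corollary \ref{cor:unionN2allN2}---but they are avoidable.
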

\begin{proof}
    Let $K$ (resp. $K_i$) be the fraction field of $A$ (resp. $A_i$)
    and let $M/K$ be a finite extension.
    Then $M=M_i\otimes_{K_i}K$ for some finite extension $M_i/K_i$.
    Let $C_i$ be the integral closure of $A_i$ in $M_i$,
    so $C_i$ is finite over $A_i$.
    Now the ring $C_i\otimes_{A_i}A$ is the integral closure of $A$ in $M$
    by Corollary \ref{cor:NormalizationBaseChange}
    using that $A_i\to A_j$ is normal of finite presentation for all $j\geq i.$
\end{proof}

The next two results are about descent of integral closure.
\begin{Lem}\label{lem:indNormalDescendN-2}
    Let $A$ be an integral domain,
    $(B_i)_i$ a direct system of domains over $A$,
    and $B=\colim_i B_i$.
    Assume that $A\to B$ is faithfully flat,
    and that $A\to B_i$ is a normal ring map essentially of finite presentation for each $i$.
    Then if $B$ is N-2, so is $A$.
\end{Lem}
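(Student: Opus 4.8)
The plan is to verify the N-2 property of $A$ directly: fix a finite field extension $M/K$ of $K=\Frac(A)$, let $A'$ be the integral closure of $A$ in $M$, and prove that $A'$ is a finite $A$-module. Since $A\to B$ is faithfully flat and module-finiteness descends along faithfully flat ring maps, it suffices to show that $B\otimes_A A'$ is a finite $B$-module. Note that $A'$ is a normal domain with $\Frac(A')=M$, so Corollary \ref{cor:NormalizationBaseChange} applies to each normal map $A\to B_i$ with $C=M$ and $C'=A'$, giving that $B_i\otimes_A A'$ is the integral closure of $B_i$ in $B_i\otimes_A M$. Taking the filtered colimit over $i$ and using that forming the integral closure commutes with filtered colimits, $B\otimes_A A'$ is the integral closure of $B$ in $B\otimes_A M$.

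The crux is to recognize this integral closure as one taken inside a finite product of finite field extensions of $L=\Frac(B)$, so that the N-2 hypothesis on $B$ can be applied. Here the normal-ring-map hypothesis does the essential work of suppressing nilpotents from inseparability: since $M$ is a finite extension of the generic residue field $\kappa((0))=K$, the definition of a normal ring map gives that each $B_i\otimes_A M$ is normal, in particular reduced. Moreover $M$ is flat over $A$ (being a $K$-vector space and $K$ a localization of $A$), so $B_i\otimes_A M$ is flat over $B_i$; hence every element of $B_i\setminus\{0\}$ is a non-zero-divisor, and the localization $(B_i\setminus\{0\})^{-1}(B_i\otimes_A A')=M\otimes_K\Frac(B_i)$ (using $A'\otimes_A K=M$) is a reduced finite $\Frac(B_i)$-algebra, that is, a finite product of finite field extensions of $\Frac(B_i)$, and it is the total ring of fractions of both $B_i\otimes_A M$ and of its subring $B_i\otimes_A A'$. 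By Lemmas \ref{lem:basechangeNormal} and \ref{lem:ascendNormal} the ring $B_i\otimes_A A'$ is normal, so it is integrally closed in its total ring of fractions; being integral over $B_i$, it therefore coincides with the integral closure of $B_i$ in $M\otimes_K\Frac(B_i)$.

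Finally I would pass to the colimit. Because the transition maps are normal maps of domains, hence injective, one has $L=\colim_i\Frac(B_i)$, so $\colim_i(M\otimes_K\Frac(B_i))=M\otimes_K L$, a finite product $\prod_j F_j$ of finite field extensions of $L$; and again by commutation of integral closure with filtered colimits, $B\otimes_A A'$ is exactly the integral closure of $B$ in $\prod_j F_j$, namely $\prod_j$ of the integral closure of $B$ in $F_j$. Each of these is finite over $B$ because $B$ is N-2, so $B\otimes_A A'$ is finite over $B$, and descent along $A\to B$ yields that $A'$ is finite over $A$. I expect the main obstacle to be exactly the two points resolved in the middle paragraph: ruling out nilpotents in $B\otimes_A M$ when $M/K$ is inseparable (for which the normal-ring-map hypothesis, not mere faithful flatness, is indispensable) and identifying the abstract tensor product $B\otimes_A A'$ with a genuine integral closure inside a product of fields (for which normality of each $B_i\otimes_A A'$ together with the total-ring-of-fractions computation is the key).
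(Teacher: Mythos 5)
Your overall strategy coincides with the paper's own proof: reduce via faithfully flat descent (\citestacks{03C4}) to showing $B\otimes_A A'$ is finite over $B$, identify $B\otimes_A A'$ with the integral closure of $B$ in $B\otimes_A M$ by applying Corollary \ref{cor:NormalizationBaseChange} to each $A\to B_i$ and passing to the colimit, and use normality of the generic fibers (the defining property of a normal ring map) to see that this integral closure sits inside a finite product of finite field extensions of $L$, where the N-2 hypothesis on $B$ applies. Your middle paragraph is correct, and its emphasis on normality of $B_i\otimes_A M$ as the tool that suppresses inseparability is exactly the paper's point.

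However, the first sentence of your final paragraph contains a genuine error: you assert that ``the transition maps are normal maps of domains, hence injective.'' The hypotheses say nothing about the transition maps $B_i\to B_j$; only the structure maps $A\to B_i$ are assumed normal and essentially of finite presentation. Transition maps in such a system can fail to be injective (take $A=k$, $B_1=k[x]$, $B_2=k$ with $x\mapsto 0$ and the system constant afterwards: both structure maps are smooth and $A\to B=k$ is faithfully flat), and then the system $(\Frac(B_i))_i$ is not even defined, so the identification $L=\colim_i\Frac(B_i)$ and the ensuing computation $\colim_i(M\otimes_K\Frac(B_i))=M\otimes_K L$ have no justification. The repair is short and lands you on the paper's actual argument: avoid the fraction fields of the $B_i$ entirely and run your middle paragraph on $B$ itself. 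Indeed $B\otimes_A M=\colim_i(B_i\otimes_A M)$ is normal (a filtered colimit of normal rings is normal) and flat over $B$, so localizing at $B\setminus\{0\}$ exhibits $M\otimes_K L$ --- a finite product of finite field extensions $F_j/L$ --- as its total ring of fractions; likewise $B\otimes_A A'=\colim_i(B_i\otimes_A A')$ is normal, hence integrally closed in this product, and being integral over $B$ it equals the product over $j$ of the integral closures of $B$ in $F_j$, each finite over $B$ by N-2. (The paper obtains the same structure slightly differently: $B\otimes_A M$ is normal and finite flat over the integral domain $K\otimes_A B$, hence a finite product of normal domains with fraction fields finite over $L$ by \citestacks{030C}; no fraction fields of the $B_i$ ever appear.)
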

\begin{proof}
    Let $K,L$ be the fraction fields of $A,B$ respectively.
    As $A\to B$ is flat,
    $B_K:=K\otimes_A B$ is a localization of $B$ and thus an integral domain of fraction field $L$.

Let $M/K$ be a finite extension.
    The ring $M\otimes_A B=\colim_i M\otimes_A B_i$ is normal by the definition of normal ring map,
    and is finite flat over the integral domain $K\otimes_A B$.
    Therefore $B_M:=M\otimes_A B$ is a finite product of normal domains \citestacks{030C} whose fraction fields are finite over $L$.
    In particular, as $B$ is N-2,
    the integral closure of $B$ in $B_M$ is finite over $B$.

Let $C$ be the integral closure of $A$ in $M$.
Then $C\otimes_A B$ is the integral closure of $B$ in $B_M$ by Corollary \ref{cor:NormalizationBaseChange} applied to all $A\to B_i$.
Therefore $C\otimes_A B$ is finite over $B$,
so $C$ is finite over $A$ by descent \citestacks{03C4}.
\end{proof}

\begin{Lem}\label{lem:descendFiniteNormalization}
    Let $(A_i)_i$ be a direct system of integral domains and let $A=\colim_i A_i$.
    Let $K_i$ (resp. $K$) be the fraction field of $A_i$ (resp. $A$).
    Assume the transition maps $A_i\to A_j$ are faithfully flat for all $j\geq i$.

Let $i_0$ be an index, $M_{i_0}$ a
finite extension of $K_{i_0}$.
For all $i\geq i_0$ let 
$M_i=K_i\otimes_{K_{i_0}}M_{i_0}$.
Let $C_i$ be the integral closure of $A_i$ in $(M_i)_{\operatorname{red}}$.
Let $M=\colim_i M_i$, $C=\colim_i C_i$,
so $C$ is  the integral closure of $A$ in $M_{\operatorname{red}}$.
Then the following hold.
\begin{enumerate}[label=$(\roman*)$]
    \item\label{desc:red} There exists $i_1\geq i_0$ so that for all $i\geq i_1$ we have
    $M_{\operatorname{red}}=A\otimes_{A_{i}}(M_{i})_{\operatorname{red}}$.

    \item\label{desc:nor} If $C$ is finite over $A$,
    then there exists an $i_2\geq i_1$ so that for all $i\geq i_2$
    we have 
    $C=A\otimes_{A_{i}}C_{i}$
    and $C_i$ is finite over $A_i$.
\end{enumerate}
\end{Lem}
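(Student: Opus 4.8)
The plan is to exploit that both reduction and integral closure commute with the filtered colimit, together with a stabilization of the nilradical. Since $\operatorname{Nil}(\colim_i M_i)=\colim_i\operatorname{Nil}(M_i)$ for a filtered colimit, we have $M_{\operatorname{red}}=\colim_i (M_i)_{\operatorname{red}}$, and $C=\colim_i C_i$ is the integral closure of $A$ in $M_{\operatorname{red}}$ as recorded in the statement. For \ref{desc:red} the key point is that the nilradical of the finite $K_i$-algebra $M_i$ stabilizes. Each $M_i=K_i\otimes_{K_{i_0}}M_{i_0}$ has the fixed $K_i$-dimension $[M_{i_0}:K_{i_0}]$, and $N_i:=\operatorname{Nil}(M_i)$ is a nilpotent ideal because $M_i$ is Artinian. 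For $j\ge i$ the flat base change $K_j\otimes_{K_i}N_i$ is again nilpotent in $M_j$, so $K_j\otimes_{K_i}N_i\subseteq N_j$ and hence $\dim_{K_i}N_i=\dim_{K_j}(K_j\otimes_{K_i}N_i)\le\dim_{K_j}N_j$. This monotone function is bounded by $[M_{i_0}:K_{i_0}]$, so it attains its maximum at some $i_1\ge i_0$ and is constant for $i\ge i_1$.

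For $j\ge i\ge i_1$ the inclusion $K_j\otimes_{K_i}N_i\hookrightarrow N_j$ is then an equality by dimension count; tensoring $0\to N_i\to M_i\to (M_i)_{\operatorname{red}}\to 0$ with the flat extension $K_i\to K_j$ gives $(M_j)_{\operatorname{red}}=K_j\otimes_{K_i}(M_i)_{\operatorname{red}}$, i.e.\ reduction commutes with the base change once $i\ge i_1$. Passing to the colimit over $j$ and using $\colim_j K_j=K$ yields $M_{\operatorname{red}}=K\otimes_{K_i}(M_i)_{\operatorname{red}}$, which is the required identification in \ref{desc:red}. In particular, since $(M_i)_{\operatorname{red}}$ is free over $K_i$ and $A\otimes_{A_i}K_i=(A_i\setminus\{0\})^{-1}A$ embeds into $K$, the natural map $A\otimes_{A_i}(M_i)_{\operatorname{red}}\hookrightarrow M_{\operatorname{red}}$ is injective for all $i\ge i_1$.

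For \ref{desc:nor}, assume $C$ is finite over $A$ and fix generators $c_1,\dots,c_r$ of $C$ as an $A$-module. Choosing $i_2\ge i_1$ large enough, every $c_k$ lies in the image of $C_{i_2}\to C$, hence of $C_i\to C$ for all $i\ge i_2$. For such $i$ the natural map $A\otimes_{A_i}C_i\to C$ is surjective, its image containing $A$ and all the $c_k$. It is also injective: by \ref{desc:red} and flatness of $A$ over $A_i$ it factors as $A\otimes_{A_i}C_i\hookrightarrow A\otimes_{A_i}(M_i)_{\operatorname{red}}\hookrightarrow M_{\operatorname{red}}$, and its image is integral over $A$, hence contained in $C$. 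Therefore $A\otimes_{A_i}C_i\cong C$; as $A_i\to A$ is faithfully flat (a filtered colimit of faithfully flat maps), the finiteness of $A\otimes_{A_i}C_i$ over $A$ descends to give that $C_i$ is finite over $A_i$ by \citestacks{03C4}.

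The step I expect to be the main obstacle is the injectivity in \ref{desc:nor}. Because the transition maps $A_i\to A_j$ are only assumed faithfully flat, and not normal, I cannot invoke Corollary \ref{cor:NormalizationBaseChange} to commute the integral closure with the base change, and a priori $A\otimes_{A_i}C_i$ need not coincide with $C$. The argument sidesteps this: finiteness of $C$ provides the generators forcing surjectivity, while \ref{desc:red} embeds $A\otimes_{A_i}C_i$ into the ambient reduced ring $M_{\operatorname{red}}$, pinning the image down to exactly the integral closure $C$.
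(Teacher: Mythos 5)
Your proof is correct, and for \ref{desc:nor} it follows the paper's argument essentially verbatim: generators of the finite $A$-module $C$ appear at some finite stage $i_2$, giving surjectivity of $A\otimes_{A_i}C_i\to C$; injectivity comes from flatness of $A_i\to A$ together with an embedding of $A\otimes_{A_i}(M_i)_{\operatorname{red}}$ into $M_{\operatorname{red}}$; and finiteness of $C_i$ over $A_i$ then follows by descent along the faithfully flat map $A_i\to A$ via \citestacks{03C4}. For \ref{desc:red} your mechanism is a mild variant: the paper uses that $\operatorname{Nil}(M)$ is a finitely generated ideal (as $M$ is finite over $K$) and that $\operatorname{Nil}(M)=\colim_i\operatorname{Nil}(M_i)$, so a finite generating set already lies in $\operatorname{Nil}(M_{i_1})$ for some $i_1$; you instead stabilize the bounded monotone quantity $\dim_{K_i}\operatorname{Nil}(M_i)$. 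Both are stabilization arguments exploiting finiteness of $M_{i_0}$ over $K_{i_0}$, and both work.

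There is one discrepancy, and it is in your favor. What you actually prove in \ref{desc:red} is $M_{\operatorname{red}}=K\otimes_{K_i}(M_i)_{\operatorname{red}}$ together with \emph{injectivity} of $A\otimes_{A_i}(M_i)_{\operatorname{red}}\to M_{\operatorname{red}}$; this is not the literal equality $M_{\operatorname{red}}=A\otimes_{A_i}(M_i)_{\operatorname{red}}$, because $A\otimes_{A_i}(M_i)_{\operatorname{red}}=(A_i\setminus\{0\})^{-1}A\otimes_{K_i}(M_i)_{\operatorname{red}}$, and $(A_i\setminus\{0\})^{-1}A$ need not equal $K$ under faithful flatness alone. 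Indeed, taking $A_i=k[x_0,\dots,x_i]$ with the inclusion maps and $M_{i_0}=K_{i_0}$, one gets $A\otimes_{A_i}(M_i)_{\operatorname{red}}=K_i[x_{i+1},x_{i+2},\dots]\subsetneq K=M_{\operatorname{red}}$ for every $i$, so the equality printed in \ref{desc:red} fails, while your version holds. The paper's own proof has the same elision: it silently identifies $M$ with $A\otimes_{A_i}M_i$, which requires exactly the missing condition $(A_i\setminus\{0\})^{-1}A=K$. This is harmless in the paper's only application (Corollary \ref{cor:unionN2allN2}), where the transition maps are finite, so that $(A_i\setminus\{0\})^{-1}A$ is a domain integral over the field $K_i$ and hence equals $K$. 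Since \ref{desc:nor} --- the part actually used, and true as stated --- needs only the embedding, which you establish, your write-up is complete and, on this point, more careful than the original.
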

\begin{proof}
    For \ref{desc:red}, the ring $M$ is finite over $K$, so the nilradical of $M$ is finitely generated.
    It is clear that the nilradical of $M$ is the union of the nilradicals of $M_i$ for all $i\geq i_0$.
    Therefore we can find an $i_1$ so that the nilradical of $M_{i_1}$ contains a set of generators of the nilradical of $M$.
    It follows that the nilradical of $M_{i}$ contains a set of generators of the nilradical of $M$ for all $i\geq i_1$, so
    $M_{\operatorname{red}}=A\otimes_{A_{i}}(M_{i})_{\operatorname{red}}$ for all $i\geq i_1$.

For \ref{desc:nor},
it is clear that $C$ is the union of all $C_{i}$.
As $C$ is finite over $A$ we can find $i_2\geq i_1$ so that $C_{i_2}$ contains a set of generators of $C$ over $A$ as an algebra.
Then for all $i\geq i_2,$
$C_{i}$ contains a set of generators of $C$ over $A$ as an algebra,
so the canonical map $A\otimes_{A_{i}}C_{i}\to C$ is surjective.
By flatness,
$A\otimes_{A_{i}}C_{i}\to A\otimes_{A_{i}}(M_{i})_{\operatorname{red}}$ is injective.
Therefore \ref{desc:red} tells us
$A\otimes_{A_{i}}C_{i}\to C$ is also injective, showing $A\otimes_{A_{i}}C_{i}= C$.

Finally, $A_i\to A$ is also faithfully flat for all $i$ \citestacks{090N},
so $C_i$ is finite over $A_i$ by descent \citestacks{03C4}.
\end{proof}

\subsection{Valuation-theoretic characterization of the N-2 property}\label{subsec:valN2}
See  \cite[\S 1]{K-DVR-defectless} for defectless valuation rings.

For an extension $w/v$ of valuations with value groups $\Gamma_w\supseteq \Gamma_v$,
we have
the ramification index $e(w/v)=[\Gamma_w:\Gamma_v]$
and the initial index \[
\varepsilon(w/v)=\begin{cases}
    1 &\Gamma_w \text{ has no smallest positive element},\\
    [\bZ\rho:\bZ\rho\cap\Gamma_v] &\Gamma_w \text{ has a smallest positive element }\rho.
\end{cases}
\]
See \cite[Chap. VI, \S 8, no.4]{Bourbaki-CA-5-7}.
We also have the inertia degree $f(w/v)=[k_w:k_v]$,
where $k_w$ (resp. $k_v$) is the  residue field of $w$ (resp. $v$).

Let $v$ be a valuation on a field $K$.
For a finite extension $L/K$,
there are only finitely many extensions,
say $w_1,\ldots,w_g$,
of $v$ to $L$;
and we have the fundamental inequality
\[
[L:K]\geq \sum_{i=1}^g e(w_i/v)f(w_i/v). 
\]
The extension $L/K$ is \emph{defectless} if equality holds.
The valuation $v$ is \emph{defectless} if $L/K$ is defectless for every $L$.

We will use the following characterization of finiteness of integral closure, see \cite[Chap. VI, \S 8, no.5, Th. 2]{Bourbaki-CA-5-7}.
The integral closure of the valuation ring of $v$ in $L$ is finite if and only if $L/K$ is defectless and $\epsilon(w_i/v)=e(w_i/v)$ for all $i$.

\begin{Lem}\label{lem:ValN2}
    Let $V$ be a valuation ring with value group $\Gamma$.
    Then $V$ is N-2 if and only if the following hold.
    \begin{enumerate}
       [label=$(\roman*)$]
        \item\label{VRN2:defect} $V$ is defectless.
        \item\label{VRN2:vGP} Either $\Gamma$ is divisible
        or $\Gamma$ has a smallest positive element $\epsilon$ and $\Gamma/\bZ\epsilon$ is divisible\footnote{To be clear, we allow the case $\Gamma/\bZ\epsilon=0$,
        that is, $V$ is a DVR.}.
    \end{enumerate} 
\end{Lem}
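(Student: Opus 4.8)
The plan is to reduce the whole statement to the valuation-theoretic criterion recalled immediately above: by that result (Bourbaki, \cite[Chap. VI, \S 8, no.5, Th. 2]{Bourbaki-CA-5-7}), $V$ is N-2 if and only if for every finite extension $L/\Frac(V)$ the extension is defectless and $\varepsilon(w_i/v)=e(w_i/v)$ for every extension $w_i$ of $v$ to $L$. The defectlessness of all $L$ is literally \ref{VRN2:defect}, so the entire content is to show that ``$\varepsilon(w/v)=e(w/v)$ for all finite $L$ and all $w$'' is equivalent to \ref{VRN2:vGP}. I would first record a purely group-theoretic translation of the equality $\varepsilon=e$. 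Work inside the divisible hull $H=\Gamma\otimes_{\bZ}\bQ$, which is also the divisible hull of each $\Gamma_w$ since $[\Gamma_w:\Gamma]<\infty$; when $\Gamma\neq 0$ let $C\subseteq H$ be the smallest nonzero convex subgroup, so that $C\cap\Gamma$ and $C\cap\Gamma_w$ are the smallest nonzero convex subgroups of $\Gamma$ and $\Gamma_w$. Setting $\Delta_w=C\cap\Gamma_w$ when $\Gamma_w$ has a smallest positive element and $\Delta_w=0$ otherwise, the multiplicativity $e(w/v)=\varepsilon(w/v)\,[\Gamma_w:\Gamma+\Delta_w]$ (using $[\Gamma+\Delta_w:\Gamma]=[\Delta_w:\Delta_w\cap\Gamma]=\varepsilon(w/v)$ in the first case, and $\Delta_w=0$, $\varepsilon=1$ in the second) shows that $\varepsilon(w/v)=e(w/v)$ holds exactly when $\Gamma_w=\Gamma+\Delta_w$.

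For \ref{VRN2:vGP}$\Rightarrow$[$\varepsilon=e$ always] I treat the two alternatives. If $\Gamma$ is divisible, then for any finite extension $e\Gamma_w\subseteq\Gamma$ together with divisibility and torsion-freeness forces $\Gamma_w=\Gamma$, whence $\varepsilon=e=1$. If instead $\Gamma$ has a smallest positive element $\epsilon$ with $\Gamma/\bZ\epsilon$ divisible, then $C\cap\Gamma=\bZ\epsilon$, and $\Delta_w=C\cap\Gamma_w$ is a finite-index overgroup of $\bZ\epsilon$ in the archimedean group $C\cap\Gamma_w$, hence discrete and infinite cyclic; so $\Gamma_w$ has a smallest positive element and we are in the relevant case. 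Passing to $H/C$, the identity $\Gamma_w/(\Gamma+\Delta_w)\cong(\Gamma_w/\Delta_w)\big/(\Gamma/\bZ\epsilon)$ exhibits a finite quotient of the torsion-free group $\Gamma_w/\Delta_w$ by its divisible subgroup $\Gamma/\bZ\epsilon$, which must be trivial; thus $\Gamma_w=\Gamma+\Delta_w$ and $\varepsilon=e$.

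For the converse I argue by contraposition, producing an explicit bad extension. The tool is a realization step: if $\gamma\in\Gamma$, $p$ is prime with $\gamma\notin p\Gamma$, and $v(a)=\gamma$, then any root $b$ of $X^p-a$ has value $\gamma/p$ of order exactly $p$ modulo $\Gamma$, so $e(w/v)\geq p$ forces $[K(b):K]=p$ and irreducibility of $X^p-a$; hence $L=K(a^{1/p})$ satisfies $[L:K]=p$, and every extension $w$ has $\Gamma_w=\Gamma+\bZ(\gamma/p)$ with $e(w/v)=p$ (this works in every characteristic). Now suppose \ref{VRN2:vGP} fails. If $\bar\Gamma:=\Gamma/(C\cap\Gamma)$ is not divisible, pick a prime $p$ and $\gamma\in\Gamma$ with $\bar\gamma\notin p\bar\Gamma$; then $\gamma/p\notin\Gamma+C$, and since $\Delta_w\subseteq C$ this gives $\Gamma_w\neq\Gamma+\Delta_w$, so $\varepsilon<e$. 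Otherwise $\bar\Gamma$ is divisible while $\Gamma$ is not; as an extension of $\bar\Gamma$ by $C\cap\Gamma$ this forces $C\cap\Gamma$ to be non-divisible and (being a non-cyclic subgroup of $\bR$) densely ordered, so that choosing $\gamma\in C\cap\Gamma$ not $p$-divisible yields $\gamma/p\in C\setminus\Gamma$ and a $\Gamma_w$ whose smallest convex subgroup is dense, hence with no least positive element; then $\varepsilon(w/v)=1<p=e(w/v)$. Either way $V$ is not N-2.

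I expect the bookkeeping in the converse to be the delicate point: one must correctly locate the failure of divisibility of $\Gamma$ — either in the quotient $\Gamma/(C\cap\Gamma)$ or confined to the bottom convex subgroup $C\cap\Gamma$ — and verify that the $X^p-a$ extension genuinely lands in the ``no smallest positive element'' regime (or adds value in the wrong place) so that $\varepsilon$ drops strictly below $e$. By contrast, once the reduction $\varepsilon=e\iff\Gamma_w=\Gamma+\Delta_w$ and the common divisible hull are set up, the sufficiency direction is essentially the standard ``divisible plus finite index plus torsion-free implies equal'' argument and is routine.
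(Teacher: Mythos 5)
Your overall route is in fact the same as the paper's: reduce everything to Bourbaki's criterion (defectless plus $\varepsilon=e$), verify $\varepsilon=e$ directly when \ref{VRN2:vGP} holds, and, when \ref{VRN2:vGP} fails, exhibit a bad extension $K(a^{1/p})$ with $v(a)=\gamma\notin p\Gamma$ — this is exactly the paper's construction, and your realization step (irreducibility of $X^p-a$, uniqueness of $w$, $\Gamma_w=\Gamma+\bZ(\gamma/p)$, $e=p$ in all characteristics) is correct. Your reformulation $\varepsilon(w/v)=e(w/v)\iff\Gamma_w=\Gamma+\Delta_w$ is a clean piece of bookkeeping and is valid whenever the objects in it are defined.

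The genuine gap is the assertion that $C$, ``the smallest nonzero convex subgroup of $H$,'' exists whenever $\Gamma\neq 0$. The convex subgroups of an ordered abelian group form a chain, but the intersection of all nonzero ones can be zero: take $\Gamma=\bigoplus_{n\geq 1}\bZ$ ordered lexicographically with the first coordinate most significant; its nonzero proper convex subgroups are the groups $F_j=\{a \mid a_i=0 \text{ for all } i\leq j\}$, and $\bigcap_j F_j=0$, so no smallest one exists. This $\Gamma$ has no smallest positive element and is not divisible, so \ref{VRN2:vGP} fails for it, yet your contrapositive dichotomy — ``$\Gamma/(C\cap\Gamma)$ not divisible'' versus ``$\Gamma/(C\cap\Gamma)$ divisible but $\Gamma$ not'' — cannot even be stated for it; the converse direction therefore does not cover all valuation rings. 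The hole is easily patched with tools you already set up: if $C$ does not exist, then by your convex-subgroup correspondence through the common divisible hull, no finite-index extension $\Gamma_w$ can have a smallest positive element (otherwise $\bQ\rho$ would be a smallest nonzero convex subgroup of $H$), so $\varepsilon\equiv 1$ and $\Delta_w\equiv 0$; failure of \ref{VRN2:vGP} then simply means $\Gamma$ is not divisible, and your realization step gives $e(w/v)=p>1=\varepsilon(w/v)$. Equivalently, redefine $C$ as the possibly-zero intersection of all nonzero convex subgroups; with $C=0$ your Case 1 absorbs this situation verbatim. Note that the paper sidesteps the issue entirely by splitting on whether $\Gamma$ has a smallest positive element, a dichotomy that is always available, rather than on the structure of a minimal convex subgroup.
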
 
\begin{proof}
    Let $K$ be the fraction field of $V$.
    Let $v$ denote the valuation $K^\times\to\Gamma$.
    
    First, assume $\Gamma$ does not have a smallest positive element.
    Then the same is true for the value group of an extension of $V$ to a finite extension of $K$.
    Thus the initial index is always $1$.
    Therefore
    $V$ is N-2 if and only if $V$ is defectless and the ramification index is $1$ for all finite extensions (\emph{loc.cit.}),
    that is, the value group of $V$ is divisible.

    For the rest of the proof, assume $\Gamma$ has a smallest positive element $\epsilon$.
    If $\Gamma/\bZ\epsilon$ is not divisible,
    we can find $\gamma\in\Gamma$ and a prime number $p$ such that $\gamma\not\in p\Gamma+\bZ\epsilon$.
    Let $a$ be an element of $V$ with value $\gamma$
    and let $A=V[T]/(T^p-a)$.
    We know the value of $a$ is not in $p\Gamma+\bZ\epsilon$,
    in particular not in $p\Gamma$.
    Therefore $A$ is an integral domain and $v$ admits a unique extension $w$ to the fraction field of $A$
    with $e(w/v)=p$.
    Moreover, the value group of $w$ is $\bZ\frac{1}{p}\gamma +\Gamma\subseteq \frac{1}{p}\Gamma$.
    Let $\delta$ be a positive element of this group.
    If $\delta<\epsilon$,
    we see $p\delta<p\epsilon$.
    Note that $p\delta\in\Gamma$.
Let $m\in\bZ_{\geq 0}$ be largest so that $p\delta\geq m\epsilon$, so $m<p$.
Then $0\leq p\delta-m\epsilon<\epsilon$,
so $p\delta=m\epsilon$.
As $p\delta>0$ we have $m\in\{1,2,\ldots,p-1\}$.
    Therefore $\frac{1}{p}\epsilon\in \bZ\frac{1}{p}\gamma +\Gamma$,
    so $\frac{1}{p}\gamma\in \bZ\frac{1}{p}\epsilon +\Gamma$,
    contradicting the choice of $\gamma$.
    Therefore $\epsilon$ is the smallest positive element of $\bZ\frac{1}{p}\gamma +\Gamma$.
    Thus 
    $\varepsilon(w/v)=1$,
    whereas $e(w/v)=p$,
    so the integral closure of $V$ in the fraction field of $A$ is not finite (\emph{loc.cit.});
    thus $V$ is not N-2.
    
    On the other hand, if $\Gamma/\bZ\epsilon$ is divisible,
    then for every finite index extension $\Gamma\subseteq\Gamma'$ we have $\Gamma'=\bQ\epsilon\cap \Gamma'+\Gamma$,
    and $\frac{1}{n}\epsilon$ (where $n=[\Gamma':\Gamma]$) is the smallest positive  element of $\Gamma'$.
    Therefore the initial index
    is always equal to the ramification index for a finite extension of the valued field $K$.
    Thus $V$ is N-2 if and only if $V$ is defectless (\emph{loc.cit.}).
\end{proof}

\begin{Cor}\label{cor:VRN2}
    Let $V$ be a valuation ring.
    Then the following are equivalent.
    \begin{enumerate}
       [label=$(\roman*)$]
        \item\label{VRN2c:N2} $V$ is N-2.
        \item\label{VRN2c:uN2} $V(x_1,\ldots,x_n)$ is N-2 for all $n\in\bZ_{\geq 0}$. 
        \item\label{VRN2c:sN2} $V(x_1,\ldots,x_n)$ is N-2 for some $n\in\bZ_{>0}$. 
    \end{enumerate}
\end{Cor}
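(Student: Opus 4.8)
The plan is to translate the N-2 property into the valuation-theoretic conditions of Lemma \ref{lem:ValN2} and then transport the defect across the Gauss extension; the one hard ingredient will be imported from \cite{K-DVR-defectless}. First I would identify the rings $V(x_1,\ldots,x_n)$ concretely. Writing $K$, $v$, $\Gamma$, $k$ for the fraction field, valuation, value group, and residue field of $V$, the ring $V(x_1,\ldots,x_n)$ is the valuation ring of the \emph{Gauss extension} of $v$ to $K(x_1,\ldots,x_n)$, given by $v(\sum_\alpha a_\alpha x^\alpha)=\min_\alpha v(a_\alpha)$, since a polynomial of $V[x_1,\ldots,x_n]$ is primitive exactly when its Gauss value is $0$. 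Thus $V(x_1,\ldots,x_n)$ is again a valuation ring, with the \emph{same} value group $\Gamma$ (the Gauss valuation already attains every value of $\Gamma$ on constants) and residue field the purely transcendental extension $k(x_1,\ldots,x_n)$. As Gauss valuations iterate, $V(x_1,\ldots,x_n)=V(x_1,\ldots,x_{n-1})(x_n)$, so it suffices to understand the single step $V\rightsquigarrow V(x)$.

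The central observation is that $V$ and $V(x)$ share the value group $\Gamma$, so condition \ref{VRN2:vGP} of Lemma \ref{lem:ValN2} is the identical statement for both. Consequently Lemma \ref{lem:ValN2} gives
\[
V\text{ is N-2}\iff V\text{ is defectless and }\Gamma\text{ satisfies \ref{VRN2:vGP}},
\]
and the same with $V$ replaced by $V(x)$. Hence the corollary reduces to two transfer statements for the defect: that $V$ defectless (with \ref{VRN2:vGP}) forces $V(x)$ defectless, and conversely that $V(x)$ defectless forces $V$ defectless. Granting both, the implications run as follows. For \ref{VRN2c:N2}$\Rightarrow$\ref{VRN2c:uN2} I would apply the first statement $n$ times, noting that the value group, hence \ref{VRN2:vGP}, is unchanged at each step; \ref{VRN2c:uN2}$\Rightarrow$\ref{VRN2c:sN2} is trivial; and \ref{VRN2c:sN2}$\Rightarrow$\ref{VRN2c:N2} follows by applying the second statement $n$ times to pass from $V(x_1,\ldots,x_n)$ defectless down to $V$ defectless, then invoking Lemma \ref{lem:ValN2} for $V$, since \ref{VRN2:vGP} for $V$ is the same condition as for $V(x_1,\ldots,x_n)$.

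For the descent $V(x)$ defectless $\Rightarrow V$ defectless, which is elementary, I would argue by base change. Given a finite extension $L/K$, put $M=L(x)=L\otimes_K K(x)$, a field finite over $K(x)$ with $[M:K(x)]=[L:K]$. If $w_1,\ldots,w_g$ are the extensions of $v$ to $L$, their Gauss extensions $w_{i,x}$ to $M$ extend the Gauss valuation $v_x$, and one checks these are \emph{all} the extensions of $v_x$: any extension restricts on $L$ to some $w_i$ and sends $x$ to an element whose residue, were it algebraic over $k_{w_i}$, would be algebraic over $k$, contradicting that $v_x$ has residue field $k(x)$; so the residue of $x$ is transcendental and the extension is forced to be $w_{i,x}$. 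Since $e(w_{i,x}/v_x)=e(w_i/v)$ and $f(w_{i,x}/v_x)=f(w_i/v)$, the fundamental inequality for $M/K(x)$ is an equality exactly when that for $L/K$ is; as $V(x)$ is defectless the former holds, so $L/K$ is defectless. As $L$ was arbitrary, $V$ is defectless.

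The main obstacle is the ascent $V$ N-2 $\Rightarrow V(x)$ defectless. Here one must bound the defect of an \emph{arbitrary} finite extension of $K(x)$, not merely of those induced from $K$, so the base-change argument above does not apply, and defectlessness of $V$ by itself is in general not enough (defectless does not imply stable). This is precisely the stability-type input I would import from \cite{K-DVR-defectless}, where the value-group hypothesis \ref{VRN2:vGP} is exactly what is needed: an N-2 valuation ring $V$ has defectless Gauss extension $V(x)$. With $V(x)$ defectless and sharing the value group $\Gamma$, Lemma \ref{lem:ValN2} makes $V(x)$ N-2, which completes \ref{VRN2c:N2}$\Rightarrow$\ref{VRN2c:uN2} and, together with the reductions above, the whole corollary.
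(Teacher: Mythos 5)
Your proof is correct, and its first half coincides with the paper's: both identify $V(x_1,\ldots,x_n)$ as a valuation ring with the same value group $\Gamma$ as $V$, so that condition \ref{VRN2:vGP} of Lemma \ref{lem:ValN2} transfers for free, and both obtain \ref{VRN2c:N2}$\Rightarrow$\ref{VRN2c:uN2} by combining Lemma \ref{lem:ValN2} with the stability theorem imported from \cite{K-DVR-defectless} (cited in the paper as Theorem 1.1 there, in the form: $V$ defectless implies $V(x_1,\ldots,x_n)$ defectless). Where you genuinely diverge is the descent \ref{VRN2c:sN2}$\Rightarrow$\ref{VRN2c:N2}. The paper invokes its Lemma \ref{lem:indNormalDescendN-2}: $V\to V(x_1,\ldots,x_n)$ is a faithfully flat normal map essentially of finite presentation, so N-2 descends via base change of the normalization (Corollary \ref{cor:NormalizationBaseChange}) plus faithfully flat descent of module-finiteness. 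You instead descend \emph{defectlessness} directly: for a finite extension $L/K$ you check that the extensions of the Gauss valuation $v_x$ to $L(x)$ are exactly the Gauss extensions of the $w_i$ (the residue of $x$ stays transcendental over the algebraic extension $k_{w_i}/k$, which forces any extension to agree with the Gauss one), that $e$ and $f$ are preserved, and that $[L(x):K(x)]=[L:K]$, so the fundamental equality for $L(x)/K(x)$ gives it for $L/K$; then Lemma \ref{lem:ValN2} finishes, since \ref{VRN2:vGP} is the same condition for both rings. Your route is more elementary and self-contained --- it never leaves valuation theory and avoids the flat-descent machinery --- while the paper's lemma is a general-purpose tool that it reuses, for instance in Corollary \ref{cor:valN2hs} for the Henselization, where no Gauss-valuation computation is available. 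Both arguments are valid.

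One small inaccuracy, harmless to your proof: you assert that defectlessness of $V$ alone is not enough to conclude $V(x)$ is defectless, and that hypothesis \ref{VRN2:vGP} is what the stability input requires. According to the paper's own use of \cite[Theorem 1.1]{K-DVR-defectless}, defectlessness alone already implies the Gauss extension is defectless; \ref{VRN2:vGP} is needed only afterwards, to convert defectlessness of $V(x_1,\ldots,x_n)$ back into N-2 via Lemma \ref{lem:ValN2}. Since the statement you import (N-2 implies $V(x)$ defectless) is weaker than what the reference provides, nothing breaks.
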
 
\begin{proof}
    The ring $V(x_1,\ldots,x_n)$ is a valuation ring with the same value group as $V$.
    If $V$ is N-2,
    then $V$ is defectless by Lemma \ref{lem:ValN2}, so $V(x_1,\ldots,x_n)$ is defectless by \cite[Theorem 1.1]{K-DVR-defectless},
    thus is N-2 by Lemma \ref{lem:ValN2} again.
    On the other hand,
    assume $V(x_1,\ldots,x_n)$ is N-2 for some $n>0$.
    Then $V$ is N-2 by Lemma \ref{lem:indNormalDescendN-2}.
\end{proof}

\begin{Cor}\label{cor:valN2hs}
    Let $V$ be a valuation ring.
    Then $V$ is N-2 if and only if the Henselization of $V$ is N-2.
\end{Cor}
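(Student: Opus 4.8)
The plan is to combine the valuation-theoretic criterion of Lemma \ref{lem:ValN2} with the colimit descent and ascent lemmas, exploiting that the Henselization $V^h$ is an immediate extension assembled from essentially étale pieces. Concretely, I would first record the structural facts: write $V^h=\colim_i B_i$ as the filtered colimit over its étale neighborhoods. Each structure map $V\to B_i$ is essentially étale, hence a normal ring map essentially of finite presentation (as in the parenthetical of Lemma \ref{lem:ascendNormal}), and $V\to V^h$ is a faithfully flat local homomorphism. Moreover each essentially étale local extension of a valuation ring is again a valuation ring, unramified and with trivial residue extension, so each $B_i$ — and hence $V^h$ — is a valuation ring that is an \emph{immediate} extension of $V$; in particular $V^h$ and every $B_i$ share the value group $\Gamma$ and the residue field of $V$.

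Granting this, the implication ``$V^h$ is N-2 $\Rightarrow$ $V$ is N-2'' is immediate from Lemma \ref{lem:indNormalDescendN-2}, applied with $A=V$, $B=V^h$, and $(B_i)_i$ the system of étale neighborhoods: the hypotheses there (the map $V\to V^h$ faithfully flat, each $V\to B_i$ normal essentially of finite presentation, and all $B_i$ integral domains over $V$) are exactly the structural facts just verified.

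For the converse ``$V$ is N-2 $\Rightarrow$ $V^h$ is N-2'' I would pass through Lemma \ref{lem:ValN2}. Condition \ref{VRN2:vGP} on the value group transfers verbatim from $V$ to $V^h$ since they share $\Gamma$, so it holds for $V^h$ as soon as it holds for $V$. The only remaining point is condition \ref{VRN2:defect}: one must show $V^h$ is defectless given that $V$ is. (Equivalently, one could instead invoke Lemma \ref{lem:limitN2} for $V^h=\colim_i B_i$ with all transition maps normal essentially of finite presentation, which reduces the claim to showing each $B_i$ is N-2, and hence, by Lemma \ref{lem:ValN2} once more, to each $B_i$ being defectless.)

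The main obstacle is therefore this single defect statement: an essentially étale local extension of a defectless valuation ring is defectless, equivalently defectlessness is preserved under Henselization. I would deduce it from standard valuation theory. An essentially étale extension is unramified and carries trivial defect, and the defect is multiplicative in towers of extensions of a fixed valuation; so for a finite extension $M/\Frac(B_i)$ and an extension $w_M$ of the valuation, writing $v,w_i$ for the valuations of $V,B_i$ one has $d(w_M/v)=d(w_M/w_i)\,d(w_i/v)=d(w_M/w_i)$, whence defectlessness of $V$ forces $d(w_M/v)=1$ and thus $d(w_M/w_i)=1$. I expect the only delicate work to be the bookkeeping of the various extensions of the valuation through the tower and pinning down a clean reference for multiplicativity of the defect; every other step is a direct application of the lemmas already established.
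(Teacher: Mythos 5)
Your proposal is correct in substance and follows essentially the same route as the paper. The paper's proof is literally ``same proof as Corollary \ref{cor:VRN2}, except that defectlessness is easier \cite[Theorem 2.14]{K-DVR-defectless},'' which unwinds to exactly your two halves: the direction ``$V^h$ N-2 $\Rightarrow$ $V$ N-2'' via Lemma \ref{lem:indNormalDescendN-2} applied to the \'etale-neighborhood presentation of the Henselization, and the direction ``$V$ N-2 $\Rightarrow$ $V^h$ N-2'' via Lemma \ref{lem:ValN2}, using that $V^h$ is a valuation ring with the same value group as $V$ together with transfer of defectlessness. The single step you isolate as ``the main obstacle'' --- that defectlessness passes from $V$ to $V^h$ --- is precisely what the paper dispatches by citing \cite[Theorem 2.14]{K-DVR-defectless}; your derivation of it via multiplicativity of the defect, with the \'etale neighborhoods contributing trivial defect because $\Frac(B_i)$ embeds into the henselized valued field $K^h$, is the standard proof of that cited fact, so nothing is missing in substance.

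One caution about your ``structural facts,'' however: the inference ``each $B_i$ is unramified over $V$ with trivial residue extension, \emph{so} it is an immediate extension'' is not valid as stated. For valuation rings, the condition $\fm_V B = \fm_B$ controls the \emph{initial} index, not the ramification index --- this is exactly the distinction driving condition \ref{VRN2:vGP} of Lemma \ref{lem:ValN2} --- and there exist flat, unramified, residually trivial extensions of valuation rings with $e>1$ (for instance, extend the lexicographic valuation with group $\bZ\times\bZ$ on $k((y))((x))$ to $K(\sqrt{x})$ with value group $\tfrac{1}{2}\bZ\times\bZ$: every positive element of the larger group dominates a positive element of the smaller, so $\fm_V W=\fm_W$ and the residue extension is trivial, yet $e=2$; such an extension is necessarily not essentially of finite presentation). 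The facts you want are true, but the correct justification is the standard theorem that the Henselization of a valued field is an \emph{immediate} extension (same value group and residue field); immediacy of each $B_i$, and hence your claims $e(w_i/v)=f(w_i/v)=1$ and $d(w_i/v)=1$, then follow because $B_i$ embeds into $V^h$. With that reference put in place of the faulty inference, your argument is complete and coincides with the paper's.
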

\begin{proof}
    Same proof as Corollary \ref{cor:VRN2},  except that 
    defectlessness is easier \cite[Theorem 2.14]{K-DVR-defectless}.
\end{proof}

\subsection{Naive dualizing modules for certain finitely presented algebras over Pr\"ufer domains}\label{subsec:S2ify}

\begin{Lem}\label{lem:HomtoS2}
    Let $A$ be a ring, $M$ an $A$-module, $a,b\in A$.
    If $M$ has zero $a$-torsion and $M/aM$ has zero $b$-torsion,
    then the same is true for $\Hom_A(N,M)$
    for all $A$-modules $N$.
\end{Lem}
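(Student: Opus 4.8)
The plan is to unwind both assertions into elementary divisibility statements about the values $\phi(n)$ of a homomorphism $\phi\colon N\to M$. Recall what the two hypotheses say: ``$M$ has zero $a$-torsion'' means $am=0$ forces $m=0$, and ``$M/aM$ has zero $b$-torsion'' means $bm\in aM$ forces $m\in aM$. The conclusion to be proved is that $\Hom_A(N,M)$ has zero $a$-torsion and that $\Hom_A(N,M)/a\Hom_A(N,M)$ has zero $b$-torsion. I would treat these two parts separately, the first being immediate and the second carrying the content.

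For the first part, suppose $\phi\in\Hom_A(N,M)$ satisfies $a\phi=0$. Evaluating at an arbitrary $n\in N$ gives $a\phi(n)=0$, so $\phi(n)=0$ by the $a$-torsion-freeness of $M$. As $n$ was arbitrary, $\phi=0$, so $\Hom_A(N,M)$ has zero $a$-torsion. I expect no difficulty here.

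The second part is the crux. Suppose $\phi\in\Hom_A(N,M)$ has $b\phi\in a\Hom_A(N,M)$; write $b\phi=a\psi$ with $\psi\in\Hom_A(N,M)$, and the goal is to produce $\eta\in\Hom_A(N,M)$ with $\phi=a\eta$. For each $n\in N$ we have $b\phi(n)=a\psi(n)\in aM$, so the hypothesis on $M/aM$ forces $\phi(n)\in aM$; thus $\phi(n)=am_n$ for some $m_n\in M$. Since $M$ has zero $a$-torsion, such $m_n$ is \emph{unique}, and I would define $\eta(n):=m_n$. The one thing that genuinely needs checking—and the only real obstacle, albeit a mild one—is that this pointwise-defined $\eta$ is $A$-linear: for additivity, $a\eta(n+n')=\phi(n+n')=\phi(n)+\phi(n')=a(\eta(n)+\eta(n'))$, and uniqueness of division by $a$ gives $\eta(n+n')=\eta(n)+\eta(n')$; the same argument with scalars yields $\eta(rn)=r\eta(n)$. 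Hence $\eta\in\Hom_A(N,M)$ and $\phi=a\eta\in a\Hom_A(N,M)$, proving that $\Hom_A(N,M)/a\Hom_A(N,M)$ has zero $b$-torsion.
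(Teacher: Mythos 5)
Your proof is correct, but it takes a more hands-on route than the paper. The paper applies the left-exact functor $\Hom_A(N,-)$ to the exact sequence $0\to M\xrightarrow{a} M\to M/aM$ (exactness on the left being the $a$-torsion-freeness of $M$), obtaining an exact sequence $0\to \Hom_A(N,M)\xrightarrow{a}\Hom_A(N,M)\to\Hom_A(N,M/aM)$. Injectivity of multiplication by $a$ there is your first part; exactness in the middle identifies $\Hom_A(N,M)/a\Hom_A(N,M)$ with a submodule of $\Hom_A(N,M/aM)$, which has zero $b$-torsion because $M/aM$ does (by the same evaluation argument as your first part). Your second part is precisely a by-hand proof of that middle exactness: the pointwise division $\phi(n)=am_n$, the uniqueness of $m_n$ coming from the $a$-torsion-freeness of $M$, and the check that $n\mapsto m_n$ is $A$-linear together show that the kernel of $\Hom_A(N,M)\to\Hom_A(N,M/aM)$ is exactly $a\Hom_A(N,M)$. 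The paper's route buys brevity---both conclusions drop out of one standard homological fact, with no element chasing---while yours buys self-containedness and transparency: it makes visible that the $a$-torsion-freeness of $M$ is used a second time (to make $\eta$ well defined and linear), a point the functorial argument absorbs into left exactness.
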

\begin{proof}
    The exact sequence
    \[
    \begin{CD}
        0@>>> M@>{a}>> M@>>> M/aM
    \end{CD}
    \]
    induces an exact sequence
    \[
    \begin{CD}
        0@>>> \Hom_A(N,M)@>{a}>> \Hom_A(N,M)@>>> \Hom_A(N,M/aM),
    \end{CD}
    \]
    showing that $\Hom_A(N,M)$ has zero $a$-torsion,
    and that $\Hom_A(N,M)/a\Hom_A(N,M)$
    is a submodule of $\Hom_A(N,M/aM)$,
    which has zero $b$-torsion.
\end{proof}
\begin{Thm}\label{thm:S2ify}
    Let $D$ be an integral domain,
    $P$ a polynomial $D$-algebra
    of finitely many variables,
    and $B$ an integral domain containing and finite over $P$.
    Let $\omega=\Hom_{P}(B,P)$
    and let $B'=\Hom_B(\omega,\omega)$.
    Then the following hold.
    \begin{enumerate}[label=$(\roman*)$]
        \item\label{S2ify:S2} For every primitive polynomial $f\in P$,
        $B'$ has zero $f$-torsion and
        $B'/fB'$ is torsion-free over $D$.
        \item\label{S2ify:finite} If $D$ is Pr\"ufer,
        then $B'$ is a finite and finitely presented $B$-algebra in the fraction field of $B$;
        the construction of $\omega$ and $B'$ is compatible with flat base change to an integral domain $E$ containing $D$.
    \end{enumerate}
\end{Thm}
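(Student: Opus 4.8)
The plan is to treat \ref{S2ify:S2} by pure torsion bookkeeping and \ref{S2ify:finite} by coherence-theoretic finiteness, keeping the two parts independent.

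For \ref{S2ify:S2}, the key point is that $P$ and its quotient by a primitive polynomial have good torsion behavior over $D$, and then Lemma \ref{lem:HomtoS2} propagates this through two $\Hom$'s. Concretely, I would first record that $P=D[x_1,\ldots,x_n]$ is a domain, so it has zero $f$-torsion for every nonzero $f$, and then prove the Gauss-type statement that $P/fP$ is torsion-free over $D$ whenever $f$ is primitive. For the latter, given $dg=fh$ with $0\neq d\in D$ and $g,h\in P$, I would pass to $K=\Frac(D)$, write $g=f\cdot(h/d)$ with $h/d\in K[x_1,\ldots,x_n]$, and use the Dedekind--Mertens content formula together with the primitivity $c(f)=D$ to conclude $h/d\in P$, hence $g\in fP$. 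Taking $a=f$ and $b=d$ (ranging over nonzero $d\in D$) in Lemma \ref{lem:HomtoS2} then shows $\omega=\Hom_P(B,P)$ has zero $f$-torsion and $\omega/f\omega$ is torsion-free over $D$; applying the lemma once more with $A=B$ and $M=N=\omega$ (note $f,d\in P\subseteq B$) gives the same for $B'=\Hom_B(\omega,\omega)$. No finiteness is needed here.

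For \ref{S2ify:finite}, I would build up finite presentation step by step, using that a Pr\"ufer domain $D$ is coherent and hence so is $P=D[x_1,\ldots,x_n]$. Since $B$ is a finitely generated torsion-free $P$-module (being a domain finite over $P$), it embeds into a finite free module $P^r$ by clearing denominators on an $F$-basis of $L:=\Frac(B)$, where $F:=\Frac(P)$ and $r=[L:F]$; as a finitely generated submodule of a coherent module, it is finitely presented over $P$, and a short kernel-chase comparing $P$-generators and $B$-generators then shows $B$ is itself a coherent ring. Consequently $\omega=\Hom_P(B,P)$ is finitely presented over $P$; computing $\omega\otimes_P F=\Hom_F(L,F)\cong L$ shows $\omega$ is a rank-one torsion-free $B$-module, hence finitely presented over the coherent ring $B$, and therefore $B'=\Hom_B(\omega,\omega)$ is finitely presented, in particular finite, over $B$.

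To place $B'$ in $\Frac(B)$ I would identify $\omega$ with a finitely generated fractional ideal $\mathfrak{f}\subseteq L$; then $B'=\Hom_B(\omega,\omega)=(\mathfrak{f}:_L\mathfrak{f})=\{x\in L:x\mathfrak{f}\subseteq\mathfrak{f}\}$ is a subring of $L=\Frac(B)$ containing $B$. For the base-change statement, with $D\to E$ flat and $E$ a domain containing $D$, I would note that $P_E:=P\otimes_D E=E[x_1,\ldots,x_n]$ is flat over $P$ and $B\otimes_P P_E=B\otimes_D E=:B_E$; since $B$ is finitely presented over $P$, $\Hom$ commutes with this flat base change, giving $\omega\otimes_D E\cong\Hom_{P_E}(B_E,P_E)$, and since $\omega$ is finitely presented over $B$ the same argument yields $B'\otimes_D E\cong\Hom_{B_E}(\omega\otimes_D E,\omega\otimes_D E)$. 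I expect the main obstacle to be the finiteness bootstrap: deducing finite presentation of $B$ over $P$, and then coherence of $B$, purely from $P$ being coherent. This is exactly where $D$ Pr\"ufer is used, and it is what makes both $\Hom$-modules, and hence the base-change compatibility, well behaved.
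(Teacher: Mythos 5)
Your overall architecture matches the paper's: part \ref{S2ify:S2} by feeding a torsion statement about $P/fP$ through Lemma \ref{lem:HomtoS2} twice, and part \ref{S2ify:finite} by coherence-based finite-presentation bootstrapping plus the rank-one computation and flat base change of $\Hom$. Your input to \ref{S2ify:S2} is obtained by a genuinely different and arguably more self-contained route: the paper quotes flatness of $P/fP$ over $D$ from \cite[Corollary to Theorem 22.6]{Mat-CRT} (plus a limit argument to remove the Noetherian hypothesis), whereas you prove the weaker but sufficient statement that $P/fP$ is $D$-torsion-free directly via Dedekind--Mertens; that argument is correct (apply the content formula to $f$ and $h$ where $dg=fh$: primitivity gives $c(h)=c(fh)=c(dg)\subseteq dD$, so $h/d\in P$). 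Similarly, in \ref{S2ify:finite} your path to finite presentation of $B$ over $P$ (embed $B$ into $P^r$ by clearing denominators, then use coherence of $P$) differs from the paper's (torsion-free over a Pr\"ufer domain implies flat, then finite and flat implies finitely presented by \citestacks{053G}), and your detour through coherence of the ring $B$ is a fine variation; the identification of $B'$ with $(\mathfrak{f}:_L\mathfrak{f})$ and the base-change argument via finite presentation agree in substance with the paper's.

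However, there is one genuine flaw: the justification ``a Pr\"ufer domain $D$ is coherent and hence so is $P=D[x_1,\ldots,x_n]$.'' Coherence does \emph{not} pass to polynomial extensions in general: by Soublin's example, the ring $\prod_{n\in\bN}\bQ[[s,t]]$ is coherent while its polynomial ring in one variable is not. Since your entire part \ref{S2ify:finite} --- finite presentation of $B$ over $P$, coherence of $B$, finite presentation of $\omega$ and $B'$, and hence the base-change statement --- rests on coherence of $P$, this is the load-bearing step and cannot be derived from coherence of $D$ alone. The fact you need is true, but it is a nontrivial theorem specific to Pr\"ufer (more generally semihereditary) base rings, and it is exactly what the paper cites: \cite[Theorem 7.3.3]{Coherent-Rings}. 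With that citation substituted for the false general principle, the rest of your argument goes through.
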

\begin{proof}
    A primitive polynomial $f$ is a nonzerodivisor in the polynomial ring $P$ over (any ring) $D$ and $P/fP$ is flat over $D$,
    see \cite[Corollary to Theorem 22.6]{Mat-CRT},
    which is the Noetherian case, and the general case follows from taking a direct union of subrings of finite type over $\bZ$.
    Now
    Lemma \ref{lem:HomtoS2} gives \ref{S2ify:S2}.

     For \ref{S2ify:finite},
     since $D$ is Pr\"ufer,
     the torsion-free $D$-algebra $B$ is flat over $D$, 
     therefore finitely presented as a $P$-module \citestacks{053G}.
     Moreover, $P$ is coherent \cite[Theorem 7.3.3]{Coherent-Rings}.
     Therefore $\omega$ is a finitely presented $P$-module by \cite[Corollary 2.5.3]{Coherent-Rings};
     note that finitely presented and coherent are the same for a module over a coherent ring,
     see \citestacks{05CX}.
     By \citestacks{0561}, $\omega$ is a finitely presented $B$-module,
     so $B'$ is a finitely presented $B$-module by \cite[Corollary 2.5.3]{Coherent-Rings} again.
     Flat base change now follows from \citestacks{087R}.

     Denote by $K$ and $L$ the fraction fields of $P,B$ respectively.
     We know $\omega$ (and thus $B'$) is a torsion-free $P$-module,
     and $\omega\otimes_P K=\Hom_K(L,K)$,
     which is an $L$-vector space of dimension $1$.
    Therefore $\omega$ is a $B$-module of rank $1$,
    so $B'\otimes_B L=L$,
    showing that the multiplication on $B'$ is commutative and that $B'$ and $B$ have the same fraction field.
      \end{proof}

\subsection{An elementary version of Serre's criterion for normality}\label{subsec:Serre}
\begin{Lem}\label{lem:Serre}
    Let $A$ be an integral domain,
    $S,T$ two multiplicative subsets of $A$.
    Assume
    \begin{enumerate}[label=$(\roman*)$]
        \item $S^{-1}A$ is normal.
        \item $T^{-1}A$ is normal.
        \item For all $s\in S$ and $t\in T$, $A/sA$ has zero $t$-torsion.
    \end{enumerate}
    Then $A$ is normal.
\end{Lem}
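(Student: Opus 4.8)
The plan is to show directly that $A$ is integrally closed in its fraction field $K=\Frac(A)$. Let $x\in K$ be integral over $A$; the goal is to prove $x\in A$. The key observation is that both $S^{-1}A$ and $T^{-1}A$ are subrings of $K$ sharing the same fraction field $K$, so the two normality hypotheses each force $x$ into the respective localization, and condition (iii) is exactly what is needed to intersect these two pieces of information back down to $A$.

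First I would record that $x$ is integral over $S^{-1}A$ and over $T^{-1}A$ as well (integrality is preserved under the ring extensions $A\hookrightarrow S^{-1}A$ and $A\hookrightarrow T^{-1}A$). Since $S^{-1}A$ is normal with fraction field $K$, hypothesis (i) gives $x\in S^{-1}A$, so we may write $x=a/s$ with $a\in A$, $s\in S$. Symmetrically, hypothesis (ii) gives $x\in T^{-1}A$, so $x=b/t$ with $b\in A$, $t\in T$.

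The decisive step is to compare these two representations inside the domain $A$. From $a/s=b/t$ in $K$ we get $at=bs$ in $A$, so in particular $at\in sA$. This says precisely that the image $\bar a$ of $a$ in $A/sA$ is killed by $t$, i.e.\ $\bar a$ is a $t$-torsion element of $A/sA$. By hypothesis (iii), $A/sA$ has zero $t$-torsion for this pair $(s,t)\in S\times T$, so $\bar a=0$, that is, $a\in sA$. Writing $a=sc$ with $c\in A$ yields $x=a/s=c\in A$, which is what we wanted.

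There is no serious obstacle here: the entire content is the elementary manipulation in the third step, and the argument is essentially a hands-on reformulation of Serre's $(R_1)+(S_2)$ criterion adapted to a setting where the two multiplicative sets $S,T$ play the role of the codimension-one and higher-codimension behavior. The only point meriting a word of care is that $S^{-1}A$, $T^{-1}A$, and $A$ all sit inside the common fraction field $K$ and that integrality ascends along these localizations, which is immediate since $A$ is a domain.
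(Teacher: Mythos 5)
Your proof is correct and follows essentially the same route as the paper: both use normality of $S^{-1}A$ and $T^{-1}A$ to write the integral element with denominators $s\in S$ and $t\in T$, then apply the zero $t$-torsion hypothesis on $A/sA$ to the identity $at=bs$ (the paper's $tsy=sty\in sA$) to conclude membership in $A$. No gaps; the two arguments differ only in notation.
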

\begin{proof}
    Let $y$ be an element in the fraction field of $A$ integral over $A$.
    By our assumptions,
    there exist $s\in S,t\in T$,
    such that $sy,ty\in A$.
    Then $tsy=sty\in sA$,
    so $sy\in sA$ as $A/sA$ has zero $t$-torsion. 
    As $A$ is an integral domain, $y\in A$.
\end{proof}
\begin{Cor}\label{cor:PruferSerre}
     Let $D$ be a Pr\"ufer domain,
    $P$ a polynomial $D$-algebra
    of finitely many variables,
    and $B$ an integral domain containing and finite over $P$.

    Assume that $B_{\fm P}$ is normal for all maximal ideals $\fm$ of $D$.
    Then the following hold.
    \begin{enumerate}[label=$(\roman*)$]
        \item\label{corSerre:S2isnor} If $B\otimes_D F$ is normal, where $F$ is the fraction field of $D$,
    then the algebra $B'$ as in Theorem \ref{thm:S2ify} is normal.
    \item\label{corSerre:N1} $B$ is N-1.
    \end{enumerate}
\end{Cor}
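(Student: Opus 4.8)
The plan is to deduce both parts from the elementary normality criterion Lemma \ref{lem:Serre}, applied to the integral domain $A=B'$ (recall that $B'\subseteq L:=\Frac(B)$ is a domain by Theorem \ref{thm:S2ify}\ref{S2ify:finite}), taking $S$ to be the multiplicative set of primitive polynomials of $P$ and $T=D\setminus\{0\}$; note $S,T\subseteq P\subseteq B'$. With these choices, condition $(iii)$ of Lemma \ref{lem:Serre} — that $B'/sB'$ has zero $t$-torsion for all $s\in S$, $t\in T$ — is exactly the statement that $B'/fB'$ is torsion-free over $D$ for primitive $f$, i.e.\ Theorem \ref{thm:S2ify}\ref{S2ify:S2}. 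So the whole content of \ref{corSerre:S2isnor} reduces to the normality of $S^{-1}B'$ and of $T^{-1}B'$. Part \ref{corSerre:N1} will then follow by spreading out the normalization of $B\otimes_D F$ and feeding the result into \ref{corSerre:S2isnor}.

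For $S^{-1}B'$, I first claim $S^{-1}B$ is normal. The key point is that a prime $\fq$ of $P$ containing no primitive polynomial satisfies $\fq\subseteq\fm P$ for some maximal ideal $\fm$ of $D$: if the ideal of $D$ generated by all coefficients of all elements of $\fq$ were the unit ideal, then finitely many $f_1,\dots,f_k\in\fq$ would have contents summing to $D$, and multiplying them by suitable monomials to make their supports disjoint and adding would produce a primitive polynomial lying in $\fq$, a contradiction; hence this content ideal lies in some maximal $\fm$, forcing $\fq\subseteq\fm P$. Consequently every prime $\fQ$ of $B$ with $\fQ\cap S=\emptyset$ contracts in $P$ to a prime inside some $\fm P$, so $P\setminus\fm P$ avoids $\fQ$ and $B_{\fQ}$ is a localization of $B_{\fm P}$, which is normal by hypothesis. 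As normality is local, $S^{-1}B$ is normal, and since $B'$ is integral over $B$ with fraction field $L$, we get $S^{-1}B'=S^{-1}B$, normal. For $T^{-1}B'=B'\otimes_D F$, the flat base change compatibility of Theorem \ref{thm:S2ify}\ref{S2ify:finite} (applied to $D\hookrightarrow F$) identifies it with $\Hom_{B_F}(\omega_F,\omega_F)$, where $B_F:=B\otimes_D F$ and $\omega_F$ is the base change of $\omega$, a torsion-free $B_F$-module of rank one. Any endomorphism of such a module is multiplication by an element of $L$ integral over $B_F$; as $B_F$ is normal by assumption, this element lies in $B_F$, whence $T^{-1}B'=B_F$ is normal. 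Lemma \ref{lem:Serre} now yields \ref{corSerre:S2isnor}.

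For \ref{corSerre:N1}, note that $B_F$ is a finitely generated domain over the field $F$ (being finite over $P\otimes_D F=F[x_1,\dots,x_n]$), so its normalization $C$ in $L$ is finite over $B_F$. Since normalization commutes with localization, $C=T^{-1}\overline{B}$, where $\overline{B}$ is the normalization of $B$ in $L$; choosing finitely many $\overline{b}_1,\dots,\overline{b}_r\in\overline{B}$ generating $C$ over $B_F$ and setting $B_1=B[\overline{b}_1,\dots,\overline{b}_r]\subseteq\overline{B}$, we obtain a ring finite over $B$ (hence over $P$), integral over $B$ with fraction field $L$, and with $B_1\otimes_D F=B_F[\overline{b}_1,\dots,\overline{b}_r]=C$ normal. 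Moreover $(B_1)_{\fm P}$ is integral over the normal ring $B_{\fm P}$ inside $L$, hence equals $B_{\fm P}$ and is normal. Thus $B_1$ meets all hypotheses of the corollary and of \ref{corSerre:S2isnor}, so applying \ref{corSerre:S2isnor} to $B_1$ shows $(B_1)'$ is normal. By Theorem \ref{thm:S2ify}\ref{S2ify:finite} it is finite over $B_1$, hence over $B$, and lies in $L$; being integral over $B$ and normal with fraction field $L$, it coincides with $\overline{B}$. Therefore $\overline{B}$ is finite over $B$, i.e.\ $B$ is N-1.

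The main technical obstacle is the normality of $S^{-1}B'$. The naive hope that a prime of $P$ meeting no primitive polynomial must be contained in some $\fm P$ cannot be obtained from prime avoidance, since the union $\bigcup_{\fm}\fm P$ is infinite; the content/degree-shifting argument above is what replaces it, and only afterwards does the reduction to the hypothesis ``$B_{\fm P}$ normal'' become routine. In part \ref{corSerre:N1}, the delicate point is that the enlargement $B_1$ must simultaneously be normal after $\otimes_D F$ and preserve normality of the localizations at the primes $\fm P$; the latter is automatic precisely because enlarging $B$ integrally inside $L$ cannot change an already normal localization.
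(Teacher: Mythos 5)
Your proof is correct and takes essentially the same route as the paper: Lemma \ref{lem:Serre} is applied to $A=B'$ with the same multiplicative sets $S$ (primitive polynomials) and $T=D\setminus\{0\}$, condition $(iii)$ of that lemma is supplied by Theorem \ref{thm:S2ify}\ref{S2ify:S2}, and part \ref{corSerre:N1} is reduced to \ref{corSerre:S2isnor} by first enlarging $B$ to a finite algebra with $B\otimes_D F$ normal, using that $F$ is universally Japanese. The only deviation is that the paper first reduces to $D$ local via the base-change compatibility in Theorem \ref{thm:S2ify}\ref{S2ify:finite}, which makes $S^{-1}B'=B'_{\fm P}=B_{\fm P}$ immediate, whereas you stay global and verify normality of $S^{-1}B$ directly through your content-ideal argument (any prime of $P$ avoiding all primitive polynomials is contained in some $\fm P$); both verifications are valid, and your handling of $T^{-1}B'$ via the rank-one endomorphism argument matches what the paper leaves implicit.
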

\begin{proof}
For \ref{corSerre:N1},
    as the field $F$ is universally Japanese \citestacks{0335}, $B\otimes_D F$ is N-1,
    so we may replace $B$ by a finite algebra to make $B\otimes_D F$ normal.
    Thus we only need to prove \ref{corSerre:S2isnor}.

    As the construction of $B'$ as in Theorem \ref{thm:S2ify} is compatible with localization,
    we may assume $D$ local.
    Then Lemma \ref{lem:Serre} applies with $A=B', S$ the set of primitive polynomials in $P$,
    and $T$ the set of nonzero elements in $D$.
\end{proof}

The next two corollaries are used in the proof of Theorem \ref{thm:smallPrUJ} and are interesting on their own.
\begin{Cor}\label{cor:largeN1smallN1}
    Let $D$ be a Pr\"ufer domain,
    $P$ a polynomial $D$-algebra
    of finitely many variables,
    and $B\subseteq C$ integral domains containing and finite over $P$.
    If $C$ is N-1, so is $B$. 
\end{Cor}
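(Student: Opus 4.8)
The plan is to reduce the statement to a single finiteness assertion about a module over the coherent ring $P$, which can be settled with no Noetherian input at all. Write $K=\Frac(P)$, $M=\Frac(B)$, $L=\Frac(C)$; since $P\subseteq B\subseteq C$ we have $K\subseteq M\subseteq L$. First I would pass to normalizations. Let $N$ be the integral closure of $C$ in $L$; because $C$ is integral over $P$, transitivity shows $N$ is also the integral closure of $P$ in $L$, and because $C$ is N-1, $N$ is finite over $C$, hence a finite $P$-module. On the other side, the integral closure of $B$ in $M$—whose finiteness over $B$ is exactly what we must prove—equals the integral closure of $P$ in $M$ (again by transitivity), which is nothing but $A:=N\cap M$. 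Since $P\subseteq B\subseteq A$, it suffices to show that $A$ is a finite $P$-module: then it is finite over $B$, so $B$ is N-1.

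The heart of the argument is to prove that $A=N\cap M$ is a finitely generated $P$-module, knowing only that $N$ is. I would first check that the quotient $N/A$ is \emph{torsion-free} over $P$: if $p\in P\setminus\{0\}$ and $n\in N$ satisfy $pn\in A\subseteq M$, then $n=(pn)/p\in M$, so $n\in N\cap M=A$. As $N/A$ is also finitely generated, being a quotient of the finite module $N$, it is a finitely generated torsion-free module over the domain $P$ and therefore embeds into a finite free $P$-module (clear denominators on a maximal $K$-independent subset of generators). Now I invoke coherence of $P=D[x_1,\ldots,x_n]$, which holds since $D$ is Pr\"ufer (\cite[Theorem 7.3.3]{Coherent-Rings}): a finitely generated submodule of a finitely presented module over a coherent ring is finitely presented (\citestacks{05CX}). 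Hence $N/A$ is finitely presented. Finally, from the short exact sequence $0\to A\to N\to N/A\to 0$ in which $N$ is finitely generated and $N/A$ is finitely presented, the standard lemma \citestacks{0519} gives that $A$ is finitely generated, which is what we wanted.

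The main obstacle, and the reason the statement is not one line, is precisely the non-Noetherian setting: over a Noetherian base one would finish immediately, as a submodule of a finite module is finite. Here the entire weight is carried by coherence of $P$, which is what upgrades ``finitely generated'' to ``finitely presented'' for $N/A$ and thereby lets the short-exact-sequence lemma return finiteness of the submodule $A$. I expect no trouble from positive characteristic or from inseparability of $L/M$, since the argument uses no trace form or separability—only the torsion-freeness of $N/A$. The one point I would verify with care is the identification of the normalization of $B$ with $N\cap M$, i.e.\ that intersecting the normalization of the larger ring with the smaller fraction field computes the normalization of the smaller ring; this is exactly where the hypotheses $B\subseteq C$ and ``$C$ is N-1'' enter, the latter guaranteeing that $N$ is finite over $P$ in the first place.
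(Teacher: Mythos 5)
Your proof is correct, and it takes a genuinely different route from the paper's. Both arguments begin identically: after normalizing, everything reduces to showing that the normalization $A$ of $B$ (your $N\cap M$; the paper's $B'$ sitting inside the normalized $C$) is a finite $P$-module, and both recognize that this is not automatic since $P$ is not Noetherian. From there the paper proceeds geometrically: it shows $C_{\fm P}$ is flat over the Pr\"ufer domain $B'_{\fm P}$ for each $\fm\in\Max(D)$, uses openness of the flat locus and compactness of the constructible topology to produce $f\notin\fm P$ with $B'_f$ finite over $P_f$, patches finitely many such $f$ over $\Max(D)$ via Lemma \ref{lem:GenericsQC}, and concludes with Corollary \ref{cor:PruferSerre}, hence with the dualizing-module machinery of Theorem \ref{thm:S2ify}. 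You replace all of this with one module-theoretic observation: $N/A$ is torsion-free over $P$ (because $A=N\cap M$ and $M$ is a field), hence embeds in a finite free $P$-module, hence is finitely presented by coherence of $P$ --- the same input \cite[Theorem 7.3.3]{Coherent-Rings} the paper invokes inside Theorem \ref{thm:S2ify} --- and then the sequence $0\to A\to N\to N/A\to 0$ with $N$ finite and $N/A$ finitely presented forces $A$ to be finite. Your route is shorter and more elementary (no flatness, no constructible topology, no Serre-type criterion), it isolates exactly where the Pr\"ufer hypothesis enters (only through coherence of $P$), and it in fact proves the stronger statement in which $P$ is allowed to be any coherent domain. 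What the paper's heavier route buys is structural consistency: it manufactures a finite $B$-algebra whose localizations at all $\fm P$ are normal, which is precisely the configuration that Corollary \ref{cor:PruferSerre} and the proofs of Theorems \ref{thm:ValRingUJ} and \ref{thm:PrufUJ} revolve around, so it reuses machinery already needed elsewhere rather than introducing a separate argument for this corollary alone.
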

\begin{proof}
    We may assume $C$ is normal.
    Let $B'$ be the normalization of $B$,
    so we have $P\subseteq B\subseteq B'\subseteq C$.
    Note that this does not immediately give $B$ is N-1 as $P$ is not necessarily Noetherian.

    As $C$ is finite over $P$,
    it is finite over $B'$.
    Also $C_{\fm P}$ is flat over the Pr\"ufer domain $B'_{\fm P}$ for every $\fm\in\Max(D)$.
    As $B'$ is an integral domain,
    the flat locus $U\subseteq \Spec(C)$ of $B'\to C$ is open,
    see    \citestacks{05IM}.
    We know $U$ contains $\Spec(C_{\fm P})\subseteq\Spec(C)$,
   therefore
    we have in $\Spec(C)$
    \[
    \bigcap_{f\in P,f\not\in\fm P} D(f)\subseteq U. 
    \]
    As the constructible topology is compact \citestacks{0901},
    we have $D(f)\subseteq U$ for some $f=f(\fm)\in P,f\not\in\fm P$.
    Then $C_f$ is flat, thus of finite presentation over $B'_f$ by \citestacks{053G}.
    By \citestacks{0367} $B'_f$ is finite over $P_f$,
    so there exists a finite $B$-algebra $B''=B''(\fm)\subseteq B'$
    such that $B''_f= B'_f$.
    By Lemma \ref{lem:GenericsQC} below there exist finitely many $\fm_1,\ldots,\fm_t\in\Max(D)$
    such that $D(f(\fm_1)),\ldots,D(f(\fm_t))$ cover $\{\fm P\mid \fm\in\Max(D)\}$,
    and we replace $B$ be the finite $B$-algebra generated by all $B''(\fm_i)$'s.
    Then $B_{\fm P}$ is normal for all $\fm\in\Max(D)$,
    so $B$ is N-1 by Corollary \ref{cor:PruferSerre}.
\end{proof}
We used
\begin{Lem}\label{lem:GenericsQC}
    Let $R$ be a ring and $P$ a polynomial $R$-algebra.
    Then the sets
    \begin{align*}        
    \Sigma&=\{\fp P\mid \fp\in\Spec(R)\}\subseteq\Spec(P)\text{ and}\\
    \mathrm{M}&=\{\fm P\mid \fm\in\Max(R)\}\subseteq\Spec(P)
    \end{align*}
 are quasi-compact.
\end{Lem}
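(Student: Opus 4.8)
The plan is to reduce both assertions to the quasi-compactness of $\Spec(R)$ by means of the \emph{content} of a polynomial. For $g\in P$, let $\fc(g)\subseteq R$ denote the ideal generated by the (finitely many) coefficients of $g$. The observation I would establish first, and which does all the work, is that for $\fp\in\Spec(R)$ and $g\in P$ one has
\[
\fp P\in D(g)\quad\Longleftrightarrow\quad \fc(g)\not\subseteq\fp.
\]
This is immediate: $P/\fp P$ is a polynomial ring over the domain $R/\fp$, hence a domain, and the image of $g$ there is nonzero precisely when some coefficient of $g$ avoids $\fp$.

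Granting this, I would treat $\Sigma$ as follows. Since the $D(g)$ form a basis of $\Spec(P)$, it suffices to extract a finite subcover from a cover $\Sigma\subseteq\bigcup_\lambda D(g_\lambda)$. For each $\fp\in\Spec(R)$ the point $\fp P$ lies in some $D(g_\lambda)$, so $\fc(g_\lambda)\not\subseteq\fp$; thus no prime of $R$ contains every $\fc(g_\lambda)$, which forces $\sum_\lambda \fc(g_\lambda)=R$. Expressing $1$ as a finite $R$-linear combination of coefficients selects indices $\lambda_1,\dots,\lambda_k$ with $\sum_{i=1}^k \fc(g_{\lambda_i})=R$, and then the observation shows $\{D(g_{\lambda_i})\}_{i=1}^k$ already covers $\Sigma$.

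For $\mathrm{M}$ I would run the same argument, the only change being the input to ``$\sum_\lambda\fc(g_\lambda)=R$'': a cover of $\mathrm{M}$ only guarantees that each \emph{maximal} ideal $\fm$ fails to contain some $\fc(g_\lambda)$. But an ideal lying in no maximal ideal is the unit ideal, so again $\sum_\lambda\fc(g_\lambda)=R$, and the same finite extraction produces the subcover. (Consistent with $\mathrm{M}\subseteq\Sigma$, the finite subfamily obtained in fact covers all of $\Sigma$.)

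I do not anticipate a genuine obstacle: once the content-ideal criterion for $\fp P\in D(g)$ is in hand, both statements collapse to the elementary fact that if a family of ideals sums to the unit ideal then finitely many of them already do, together with the quasi-compactness of $\Spec(R)$ (respectively the fact that a proper ideal lies in a maximal ideal). The one point deserving care is the criterion itself, specifically the identification of $P/\fp P$ as a polynomial ring over $R/\fp$, which uses that extension of primes from $R$ to a polynomial algebra again yields primes; everything after that is formal.
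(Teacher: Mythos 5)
Your proof is correct and rests on exactly the same key observation as the paper's: $\fp P\in D(g)$ precisely when the content $\fc(g)$ is not contained in $\fp$, which the paper phrases as saying that the image of $D(g)\cap\Sigma$ in $\Spec(R)$ is the (finite) union of the principal opens defined by the coefficients of $g$. The only difference is packaging: the paper uses this to show that the canonical continuous bijections $\Sigma\to\Spec(R)$ and $\mathrm{M}\to\Max(R)$ are homeomorphisms and then quotes quasi-compactness of $\Spec(R)$ and $\Max(R)$, whereas you inline that quasi-compactness argument by extracting a finite subfamily from $\sum_\lambda\fc(g_\lambda)=R$.
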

\begin{proof}
    Indeed, the canonical continuous bijection $\Sigma\to \Spec(R)$
    is a homeomorphism.
    To see this, let $F\in P$.
    Then the image of $D(F)\cap \Sigma$ in $\Spec(R)$ is the (finite) union of principal opens defined by the coefficients of $F$, thus is open, as desired.
    The lemma follows as $\Spec(R)$ and $\Max(R)$ are quasi-compact. 
\end{proof}

The following result is used in the proof of Theorem \ref{thm:smallPrUJ}.
\begin{Cor}\label{cor:unionN2allN2}
    Let $(D_i)_i$ be a direct system of Pr\"ufer domains such that the transition maps $D_i\to D_j$ are injective and finite.
    Let $P_0$ be a polynomial algebra of finitely many variables over $\bZ$ and let $P_i=D_i\otimes_\bZ P_0$.
    If $P:=\bigcup_i P_i$ is N-2,
    then $P_i$ is N-2 for all $i$.
\end{Cor}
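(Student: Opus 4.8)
The plan is to fix an index $i$ and a finite field extension $L/K_i$ and to show that the integral closure of $P_i$ in $L$ is finite over $P_i$; since $L$ is an arbitrary finite extension this will give that $P_i$ is N-2, and since $i$ is arbitrary this proves the corollary. The first thing I would record is that the transition maps are faithfully flat: each $D_j$ is a torsion-free $D_i$-module finite over $D_i$, hence flat since $D_i$ is Pr\"ufer (as used in the proof of Theorem~\ref{thm:S2ify}), and the inclusion $D_i\hookrightarrow D_j$ is integral, so $\Spec(D_j)\to\Spec(D_i)$ is surjective by lying over. Base changing along $D_i\to P_i$ shows $P_i\to P_j$ is faithfully flat and finite for all $j\ge i$. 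Thus $(P_j)_{j\ge i}$ is a direct system with faithfully flat transition maps and colimit $P$, so Lemma~\ref{lem:descendFiniteNormalization} applies.

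Next I would feed $L$ into Lemma~\ref{lem:descendFiniteNormalization} with base index $i$ and $M_i=L$. Writing $K=\Frac(P)=\colim_j K_j$, the colimit extension is $M=K\otimes_{K_i}L$, which is finite over $K$ of degree $[L:K_i]$; hence $M_{\operatorname{red}}$ is a finite product of finite field extensions of $K$, and because $P$ is N-2 the integral closure $C$ of $P$ in $M_{\operatorname{red}}$ is finite over $P$. Lemma~\ref{lem:descendFiniteNormalization}\ref{desc:nor} then produces an index $i_2\ge i$ so that $C_{i_2}$, the integral closure of $P_{i_2}$ in $(M_{i_2})_{\operatorname{red}}$, is finite over $P_{i_2}$. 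Since $(M_{i_2})_{\operatorname{red}}=\prod_a\Lambda_a$ is a finite product of finite field extensions $\Lambda_a$ of $K_{i_2}$, the ring $C_{i_2}$ is the corresponding product of the integral closures $C_{i_2,a}$ of $P_{i_2}$ in $\Lambda_a$, and each factor $C_{i_2,a}$ is finite over $P_{i_2}$ as a direct summand; moreover each $C_{i_2,a}$ is a normal domain, hence N-1.

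Now I would descend to $P_i$. The map $L\to M_{i_2}=K_{i_2}\otimes_{K_i}L$ is injective, and composing with a projection to any factor gives a nonzero ring map from the field $L$, hence an embedding $L\hookrightarrow\Lambda_{a_0}$ for some $a_0$. Choosing finitely many elements $\alpha_1,\dots,\alpha_r\in L$ that are integral over $P_i$ and generate $L$ over $K_i$, I form the domain $B_0=P_i[\alpha_1,\dots,\alpha_r]$, which is finite over $P_i$, has $\Frac(B_0)=L$, and lies in $C_{i_2,a_0}$ because it is integral over $P_i\subseteq P_{i_2}$. The key point is that $C_{i_2,a_0}$, although defined as a normalization over $P_{i_2}$, is finite over $P_i$ as well, since $P_{i_2}$ is finite over $P_i$. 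Thus $B_0\subseteq C_{i_2,a_0}$ are integral domains both finite over the polynomial ring $P_i$ over the Pr\"ufer domain $D_i$, with $C_{i_2,a_0}$ N-1, so Corollary~\ref{cor:largeN1smallN1} gives that $B_0$ is N-1; as the integral closure of $P_i$ in $L$ coincides with that of $B_0$, this is exactly the finiteness we want.

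The main obstacle, and the reason for this indirect route, is that the finite faithfully flat transition map $P_i\to P_{i_2}$ is \emph{not} normal — its fibers over $D_i$ need not be geometrically normal — so the formation of the normalization does not commute with it and one cannot simply descend finiteness of $C_{i_2}$ down to level $i$ through Corollary~\ref{cor:NormalizationBaseChange}. I circumvent this by never transporting the normalization itself: instead I exploit that finiteness over $P_{i_2}$ upgrades automatically to finiteness over $P_i$ through the tower of finite maps, which lets me compare $B_0$ and the normal ring $C_{i_2,a_0}$ as two algebras finite over the \emph{same} Pr\"ufer-based polynomial ring $P_i$ and invoke the ``a subring finite over $P_i$ of an N-1 algebra is N-1'' statement of Corollary~\ref{cor:largeN1smallN1}. (Note that one never needs $\colim_j D_j$ to be Pr\"ufer; only the fixed Pr\"ufer bases $D_i$ and $D_{i_2}$ are used.)
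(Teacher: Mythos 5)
Your proposal is correct and follows essentially the same route as the paper: establish faithful flatness of the transition maps $P_i\to P_j$ from the Pr\"ufer hypothesis, descend the finite normalization via Lemma \ref{lem:descendFiniteNormalization}, split the reduced fiber ring into finite field factors to get normal domain factors of $C_{i_2}$, and conclude with Corollary \ref{cor:largeN1smallN1} over the base $P_i$. The only cosmetic difference is that the paper starts from an arbitrary domain $B_{i_0}$ finite over $P_{i_0}$ while you start from a finite extension $L/K_i$ and build the order $B_0$ inside it, which is the same maneuver made explicit.
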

Note that
the polynomial algebra in the statement becomes redundant
once Theorem \ref{thm:smallPrUJ} is proved.
\begin{proof}
    Let $i_0$ be a given index and let $B_{i_0}$ be an integral domain containing and finite over $P_{i_0}$.
    We must show $B_{i_0}$ is N-1.

    Let $M_{i_0}$ be the fraction field of $B_{i_0}$,
    and let $M_i=P_i\otimes_{P_{i_0}}M_{i_0}$
    and $M=P\otimes_{P_{i_0}}M_{i_0}$.
    As $P$ is N-2, the integral closure of $P$ in $M_{\operatorname{red}}$ is finite over $P$.

    As the maps $D_i\to D_j$ are injective and finite,
    and as each $D_i$ is a Pr\"ufer domain,
    the maps $D_i\to D_j$ are  faithfully flat,
    hence so are the maps $P_i\to P_j$.
    Therefore Lemma \ref{lem:descendFiniteNormalization} applies and tells us there exists an $i\geq i_0$ such that
    the integral closure $C_i$ of $P_i$ in $(M_i)_{\operatorname{red}}$ is finite over $P_i$ and therefore finite over $P_{i_0}$.
    As $(M_i)_{\operatorname{red}}$ is a finite product of fields and a localization of $C_i$
it follows that $C_i$ is a finite product of normal domains \citestacks{030C}.
Applying Corollary \ref{cor:largeN1smallN1} to a normal domain factor of $C_i$,
we see $B_{i_0}$ is N-1,
 as desired.
\end{proof}

\section{Proof of Theorems}

\subsection{Quick reductions}

\begin{Lem}\label{lem:valN2quot}
    Let $D$ be a Pr\"ufer domain that is N-2.
    Then for all $\fp\in\Spec(D)$,
    $D_\fp$ and $D/\fp$ are N-2.
\end{Lem}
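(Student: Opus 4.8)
The two assertions are essentially independent, so I would treat them separately, and the localization statement is by far the easier one. For $D_\fp$, the key point is that localization changes nothing essential: $D_\fp$ has the same fraction field $F=\Frac(D)$ as $D$, so a finite extension $M/F$ is also a finite extension of $\Frac(D_\fp)$. Since integral closure commutes with localization, the integral closure of $D_\fp$ in $M$ is $(D\setminus\fp)^{-1}C$, where $C$ is the integral closure of $D$ in $M$. As $D$ is N-2, $C$ is finite over $D$, hence its localization is finite over $D_\fp$; therefore $D_\fp$ is N-2.

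The quotient is the substantive part. Write $R=D/\fp$; since $\fp$ is prime and $D$ is Pr\"ufer, $R$ is again a Pr\"ufer domain, because its localizations are quotients of valuation rings by primes, hence valuation rings. I would first reduce the N-2 property of $R$ to the statement that every integral domain $B'$ that is module-finite over $R$ is N-1: any finite extension of $\Frac(R)$ is the fraction field of such a $B'$ (generated over $R$ by suitably scaled, integral elements), and the integral closure of $R$ in that field coincides with the integral closure of $B'$ in its fraction field.

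The heart of the argument is to lift such a $B'$ back to $D$. Presenting $B'=R[T_1,\dots,T_r]/\fa$ with $B'$ finite over $R$, I would lift the monic integral relations of the generators to monic polynomials over $D$ and form the resulting module-finite, free $D$-algebra $A$, together with the prime $\fP\subseteq A$ lying over $\fp$ with $A/\fP=B'$. Passing to a minimal prime $\fq\subseteq\fP$ of $A$, which contracts to $(0)$ in $D$ by flatness (going-down), produces a domain $B=A/\fq$ that is finite over $D$, contains $D$, and carries a prime $\fP'$ over $\fp$ with $B/\fP'=B'$. Now the N-2 hypothesis on $D$ enters: the integral closure $C$ of $D$ in the finite extension $\Frac(B)/F$ is finite over $D$, and is again a Pr\"ufer domain, since the integral closure of a Pr\"ufer domain in a finite extension of its fraction field is Pr\"ufer. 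Choosing a prime $\fP''$ of $C$ over $\fP'$, the quotient $C/\fP''$ is Pr\"ufer, hence integrally closed, hence N-1; it is finite over $R$ and contains $B'=B/\fP'$. Corollary \ref{cor:largeN1smallN1}, applied to the Pr\"ufer domain $R$ with $P=R$ (polynomial algebra in zero variables) and the inclusion $B'\subseteq C/\fP''$, then yields that $B'$ is N-1, which completes the reduction and shows $R$ is N-2.

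I expect the main obstacle to lie in the lifting step and its supporting inputs rather than in any single hard computation: one must pass from a finite domain over the quotient $R$ to a genuine domain finite over $D$ (the naive lift $A$ need not be a domain, which is exactly why the minimal-prime/going-down maneuver is needed), and one must know that the integral closure $C$ of the Pr\"ufer domain $D$ in a finite field extension is again Pr\"ufer so that $C/\fP''$ is automatically integrally closed. The rest is bookkeeping: checking that $C/\fP''$ is finite over $R$ and contains $B'$, so that Corollary \ref{cor:largeN1smallN1} applies.
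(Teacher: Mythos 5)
Your proof is correct, and for the quotient statement it takes a genuinely different route from the paper's. (The localization half is the same in substance; the paper just cites \citestacks{032G}.) For $D/\fp$, the paper lifts the \emph{field extension}: given a finite extension $M/\kappa(\fp)$, it lifts minimal polynomials to monic polynomials over $D_\fp$, which stay irreducible over $\Frac(D)$ because $D_\fp$ is a valuation ring; this yields a finite extension $L/\Frac(D)$ of the \emph{same degree} such that $D_\fp$ extends uniquely to $L$ with residue field extension exactly $M/\kappa(\fp)$. The integral closure $E$ of $D$ in $L$ is then finite (by N-2) and Pr\"ufer, and its image in $M$ is a quotient of a Pr\"ufer domain, hence normal, with fraction field $M$; so that image \emph{is} the integral closure of $D/\fp$ in $M$, and finiteness passes to images. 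You instead lift the \emph{algebra}: reduce N-2 of $R=D/\fp$ to N-1 of finite domain extensions $B'$ of $R$, lift $B'$ to a domain $B$ finite over $D$ (monic lifts plus a minimal prime and going-down, correctly handling that the naive lift need not be a domain), pass to the integral closure $C$ of $D$ in $\Frac(B)$ and then to the quotient $C/\fP''$, and descend N-1 along $B'\subseteq C/\fP''$ via Corollary \ref{cor:largeN1smallN1}. Both arguments rest on the same two classical facts (the integral closure of a Pr\"ufer domain in a finite extension of its fraction field is Pr\"ufer; quotients of Pr\"ufer domains by primes are Pr\"ufer, hence normal), and your use of Corollary \ref{cor:largeN1smallN1} is not circular, since it is established before, and independently of, this lemma. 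The trade-off is this: the paper's degree-preserving lift forces the image of $E$ to be literally the normalization in question, so no descent machinery is needed; your lift has no degree control, and the price is precisely that $\Frac(C/\fP'')$ may be strictly larger than $\Frac(B')$ --- since $R$ is not Noetherian, a submodule of a finite module need not be finite, so you genuinely need the coherence-based Corollary \ref{cor:largeN1smallN1} rather than the naive containment argument (you implicitly recognized this, which is the right instinct). One small point worth a sentence in a final write-up: you invoke that corollary with zero variables ($P=R$); its statement and proof do go through in that degenerate case, or one can adjoin a dummy variable and check that $B'[x]$ being N-1 implies $B'$ is N-1.
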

\begin{proof}
    By \citestacks{032G} $D_\fp$ is N-2.
    To show $D/\fp$ is N-2,
    note that a finite extension  $M/\kappa(\fp)$ is realized by a finite extension $L$ of the fraction field $K$ of $D$
    of the same degree.
To be precise, when $M/\kappa(\fp)$ is generated by a single element,
we have $M=\kappa(\fp)[T]/\overline{F}(T)$ for some monic irreducible polynomial $\overline{F}\in \kappa(\fp)[T]$.
We can lift $\overline{F}$ to a monic polynomial $F\in D_\fp[T]$.
As $D_\fp$ is a valuation ring we see $F$ is irreducible,
$L=K[T]/F(T)$
is a finite extension, $D_\fp$ has a unique extension to $L$,
and the corresponding residue field extension is $M/\kappa(\fp)$ for degree reasons.
In general, we can apply this construction inductively,
and we get a finite extension $L/K$
so that $D_\fp$ has a unique extension to $L$
and that the corresponding residue field extension is $M/\kappa(\fp)$.

    The integral closure $E$ of $D$ in $L$ is a Pr\"ufer domain finite over $D$,
    and the integral closure $E_1$ of $D/\fp$ in $M$ is the image of $E$,
    as the quotient of a Pr\"ufer domain by a prime ideal is a Pr\"ufer domain and therefore normal.
    Thus $E_1$ is finite over $D/\fp$.
\end{proof}

\begin{Lem}\label{lem:redtoPoly}
    Let $D$ be a Pr\"ufer domain.
    Then $D$ is universally Japanese if and only if for all prime ideals $\fp$ of $D$,
    every polynomial algebra $P$ of finitely many variables  over $D/\fp$ is N-2. 
\end{Lem}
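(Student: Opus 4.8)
The forward direction is immediate: a polynomial algebra $P$ of finitely many variables over $D/\fp$ is an integral domain and, via the surjection $D\to D/\fp$, a finite type $D$-algebra, so it is N-2 whenever $D$ is universally Japanese. For the converse, the plan is to reduce an arbitrary finite type domain to one finite over a polynomial ring and then feed it into Corollary \ref{cor:largeN1smallN1}. Given a finite type $D$-algebra domain $A$, put $\fp=\ker(D\to A)$, so that $A$ is a finite type algebra over the Pr\"ufer domain $D'=D/\fp$ with $D'\hookrightarrow A$; being torsion-free, $A$ is flat, hence finitely presented, over $D'$. By the definition of universally Japanese it suffices to show every such $A$ is N-1. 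Note the hypothesis is available for every polynomial algebra over each further quotient $D/\fq$ with $\fq\supseteq\fp$, and, by \citestacks{032G}, over every localization of such a quotient.

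The engine is the following. Suppose $y_1,\dots,y_d\in A$ and $g\in D'\setminus\{0\}$ are such that $A_g$ is module-finite over $P':=D'_g[y_1,\dots,y_d]$. Then $P'$ is N-2 (the hypothesis for $\fp$, localized at $g$), so the integral closure $C$ of $P'$ in $\Frac(A)$ is finite over $P'$, is normal, and satisfies $\Frac(C)=\Frac(A)$ and $A_g\subseteq C$; Corollary \ref{cor:largeN1smallN1} then gives that $A_g$ is N-1. Ordinary Noether normalization over the field $\Frac(D')$, after clearing denominators into a single $g\neq 0$, produces such data over a dense open $D(g)\subseteq\Spec(D')$, settling the generic locus.

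It remains to cover all of $\Spec(D')$, and this is the crux. For each $\fq\in\Spec(D')$ I would produce $g=g(\fq)\notin\fq$ together with a finite Noether normalization of $A_{g}$ over a polynomial ring over $D'_{g}$, whence $A_{g}$ is N-1 by the engine above; here the relevant polynomial algebras live over the various quotients $D/\fq$, which is precisely why the hypothesis must range over all $\fp$. Since $\Spec(D')$ is quasi-compact (this is the mechanism behind Lemma \ref{lem:GenericsQC}), finitely many $D(g_i)$ suffice; as the $g_i$ then generate the unit ideal in $D'$, hence in $A$, the finiteness of the normalization over each $A_{g_i}$ glues to finiteness of the normalization over $A$, so $A$ is N-1.

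The main obstacle is exactly this last step. Over the non-Noetherian base $D'$ the fiber dimension of $\Spec(A)\to\Spec(D')$ may jump along closed subsets, so there is no single global Noether normalization, and ordinary Noetherian induction on $\Spec(D')$ is unavailable. The delicate point is therefore to manufacture, uniformly near each $\fq$ and without transfinite recursion, a finite model of $A$ over a polynomial ring over a quotient of $D$, and to arrange the local N-1 conclusions so that the quasi-compactness gluing applies. Everything else is formal given Corollary \ref{cor:largeN1smallN1} and the N-2 hypothesis on polynomial algebras.
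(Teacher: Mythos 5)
Your forward direction, your ``engine,'' and your treatment of the generic locus are all correct (though Corollary \ref{cor:largeN1smallN1} is overkill there: once $A_g$ is module-finite over the N-2 polynomial ring $P'$, the integral closure of $A_g$ in $\Frac(A)$ coincides with that of $P'$, hence is finite, so $A_g$ is N-1 directly). But the step you yourself flag as the crux --- producing, for each $\fq\in\Spec(D')$, some $g\notin\fq$ such that $A_g$ is \emph{module-finite} over a polynomial ring over $D'_g$ --- is a genuine gap, not a routine verification, and your plan as literally stated is even impossible: for $D'=\bZ$ and $A=\bZ[x,1/p]$ there is no $g\notin(p)$ with $A_g$ finite over a polynomial ring over $\bZ_g$, since such a map would be finite and dominant, hence surjective onto $\bA^d_{\bZ_g}$, while $\Spec(A_g)$ has empty fiber over $(p)$. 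Even after restricting to the image of $\Spec(A)\to\Spec(D')$, finite Noether normalization locally over a non-field base is a deep matter (it is essentially the existence of finite morphisms to affine space in the sense of Gabber--Liu--Lorenzini, which needs Picard-torsion hypotheses and Noetherian input), and nothing in the paper's toolkit delivers it. Your diagnosis of the obstacle is also off: over a Pr\"ufer base the fiber dimension of a flat finite type domain cannot jump --- fibers are equidimensional of constant dimension by \citestacks{00QK}, and this fact is actually an ingredient of the correct proof. The real difficulty is that what one can always manufacture locally is a \emph{quasi-finite} map from a polynomial ring, never a finite one.

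The paper's proof sidesteps this entirely by localizing on $\Spec(A)$ rather than on $\Spec(D')$, and by never requiring $A$ to be finite over a polynomial ring. Fix a maximal ideal $\fM$ of $A$ and set $\fq=\fM\cap D'$; by \citestacks{032H} it suffices to show $A_f$ is N-1 for some $f\in A\setminus\fM$. By \citestacks{00QE}, after replacing $A$ by such a localization, there is a quasi-finite ring map $P=D'[x_1,\ldots,x_n]\to A$ with $n=\dim A_\fM/\fq A_\fM$; this map is injective because the fibers of $A$ over $D'$ are equidimensional of dimension $\dim(A\otimes_{D'}\Frac(D'))$ by \citestacks{00QK}; and then Zariski's Main Theorem \citestacks{00QB} produces a finite $P$-subalgebra $B\subseteq A$ such that $\Spec(A)\to\Spec(B)$ is an open immersion. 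Now the hypothesis applies: $P$ is N-2, $B$ is a domain containing and finite over $P$, so $B$ is N-1, and N-1 passes to the open subscheme $\Spec(A)$ since normalization commutes with the localizations covering it. The idea you are missing is exactly this: settle for an open immersion into the spectrum of a finite cover of affine space --- which always exists locally on the source --- instead of a finite Noether normalization, which need not exist at all.
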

\begin{proof}
    ``Only if'' is trivial.
    To show ``if,''
    let $A$ be a finitely generated $D$-algebra that is an integral domain.
    We need to show $A$ is N-1.
    We may assume $D$ is contained in $A$,
    as the quotient of a Pr\"ufer domain by a prime ideal is a Pr\"ufer domain.
    Let $F$ be the fraction field of $D$.

    Let $\fM$ be a maximal ideal of $A$ and let $\fp=\fM\cap D$.
    By \citestacks{032H},
    it suffices to show $A_f$ is N-1 for some $f\in A,f\not\in\fM$.
    Let $n$ be the relative dimension of $A/D$ at $\fM$,
    that is,
    $n=\dim A_\fM/\fp A_\fM$.
    By \citestacks{00QE},
    replacing $A$ by a localization,
    there exists a polynomial $D$-algebra $P$ of $n$ variables and a quasi-finite ring map $P\to A$.
    Notice that $A\otimes_D \kappa(\fp)$ is equidimensional and its dimension is equal to $\dim(A\otimes_D F)$,
    see \citestacks{00QK}.
    Therefore $P\to A$ is injective.
    By \citestacks{00QB},
    there exists a finite $P$-algebra $B$ in $A$ such that $\Spec(A)\to\Spec(B)$ is an open immersion.
    Since $P$ is N-2, we see $B$ is N-1,
    so $A$ is N-1, as desired.
\end{proof}

\subsection{Proof of Theorem \ref{thm:ValRingUJ}}
Strictly speaking, this is unnecessary, as the nontrivial implication in Theorem \ref{thm:ValRingUJ} is a special case of Theorem \ref{thm:smallPrUJ}.
However, as the proof is almost immediate at this point, we do present it.

The implication \ref{VRUJ:uJ} implies \ref{VRUJ:N2} is trivial,
whereas \ref{VRUJ:N2}\ref{VRUJ:uN2}\ref{VRUJ:sN2}
are equivalent
    by Corollary \ref{cor:VRN2}.
    It remains to show \ref{VRUJ:uN2} implies \ref{VRUJ:uJ}.
    
    By Lemmas \ref{lem:valN2quot}
    and \ref{lem:redtoPoly},
    it suffices to show, for a valuation ring $V$ and $n\in\bZ_{>0}$,
    if $V(x_1,\ldots,x_n)$ is N-2,
    then $P:=V[x_1,\ldots,x_n]$ is also N-2.
    Let $B$ be an integral domain containing and finite over $P$.
    We must show $B$ is N-1.

    Let $\fm$ be the maximal ideal of $V$. 
    We have $P_{\fm P}=V(x_1,\ldots,x_n)$ is N-2,
    so $B_{\fm P}$ is N-1.
    Replacing $B$ by a finite algebra in its fraction field,
    we may assume $B_{\fm P}$ is normal.
    Then $B$ is N-1 by Corollary \ref{cor:PruferSerre}.
\subsection{Proof of Theorem \ref{thm:PrufUJ}}
We know \ref{PrUJ:uJ} implies \ref{PrUJ:N2} and \ref{PrUJ:locN2} implies \ref{PrUJ:maxN2} trivially,
whereas \ref{PrUJ:N2} implies \ref{PrUJ:locN2} by 
Lemma \ref{lem:valN2quot}.
Therefore,
it suffices to show a Pr\"ufer domain $D$ is universally Japanese
assuming for all $\fm\in\Max(D)$, $D_\fm$ is N-2 and has divisible value group and perfect residue field.

    As the assumptions are inherited by quotients (Lemma \ref{lem:valN2quot}),
    by Lemma \ref{lem:redtoPoly}
    it suffices to show $P:=D[x_1,\ldots,x_n]$ is N-2.
    Let $B$ be an integral domain containing and finite over $P$.
    We must show $B$ is N-1.
    Let $L$ be the fraction field of $B$.

    Let $\fm\in\Max(D)$.
    Then $P_{\fm P}=D_\fm(x_1,\ldots,x_n)$
    is a valuation ring that is N-2 (Corollary \ref{cor:VRN2}).
    Therefore the integral closure of $P_{\fm P}$ in $L$ is finite over $P_{\fm P}$.
    We can then find a finite $B$-algebra
    $C=C(\fm)$ in $L$ such that $C_{\fm P}$ is normal.

Let $\fq$ be a minimal prime of $\fm C$, so $\fq\cap P=\fm P$ as $P$ is normal and $C$ is an integral domain (going-down, \citestacks{00H8}).
    Therefore $C_\fq$ is normal, so it is a valuation ring extending $P_{\fm P}$.
    Since the value group of $P_{\fm P}$ is divisible,
    we see $\fm C_\fq=\fq C_\fq$.
    Therefore $C_\fq/\fm C_\fq$ is a field extension of $D/\fm$,
    and it is separable\footnote{Here this just means $C_\fq/\fm C_\fq$ is finite separable over a purely transcendental extension of $D/\fm$.}
    as $D/\fm$ is perfect by assumption.
    As $D$ is Pr\"ufer, $D\to C$ is flat,
    and therefore of finite presentation \citestacks{053G},
    therefore by \citestacks{00TF} $D\to C_g$ is smooth for some $g\in C,g\not\in\fq$.
    In particular, $C_g$ is normal (Lemma \ref{lem:ascendNormal}).
    
    From the previous discussion there exists an open $U=U(\fm)$ of $\Spec(C)$ containing all minimal primes of $\fm C$, that is, all preimages of $\fm P$, such that $U$ is normal.
    The image in $\Spec(P)$  of the complement of $U$ is closed (cf. \citestacks{01WM})
    and does not contain $\fm P$,
    so there exists $f=f(\fm)\in P,f\not\in\fm P$
    such that $C_f$ is normal.

    By Lemma \ref{lem:GenericsQC},
    there exist finitely many $\fm_1,\ldots,\fm_t\in\Max(D)$
    such that $D(f(\fm_1)),\ldots,D(f(\fm_t))$ covers $\{\fm P\mid \fm\in\Max(D)\}$,
    and we replace $B$ be the finite $B$-algebra generated by all $C(\fm_i)$'s.
    Now for every $\fm\in\Max(D)$, $B_{\fm P}$ is normal.
    By Corollary \ref{cor:PruferSerre},
    $B$ is N-1.
\subsection{Proof of Theorem \ref{thm:ExampleUJ}}
    Lemma \ref{lem:ValN2}
    show that the rings in \ref{ExUJ:absVal} and \ref{ExUJ:char0Val}
    are N-2,
    as a valuation ring of residue characteristic zero is defectless, see \cite[Corollary 2.12]{K-DVR-defectless}.
    Either
    Theorem \ref{thm:ValRingUJ} or Theorem \ref{thm:PrufUJ}
    then gives \ref{ExUJ:absVal} and \ref{ExUJ:char0Val}.
 
    By Theorem \ref{thm:PrufUJ},
    \ref{ExUJ:absVal} implies \ref{ExUJ:absPr} and
    \ref{ExUJ:char0Val} implies \ref{ExUJ:char0Pr}.
    The rings in \ref{ExUJ:absclosVal}\ref{ExUJ:absclosPr}\ref{ExUJ:absclosDdkd}
    are absolutely integrally closed Pr\"ufer domains.
    Finally, the normalization of a Noetherian integral domain of dimension $1$
    is a Dedekind domain by the theorem of Krull-Akizuki,
    see \citestacks{09IG},
    so \ref{ExUJ:absclosDdkd} gives \ref{ExUJ:absclosNoe1}.
\subsection{Proof of Theorem \ref{thm:smallPrUJ}}
Again, as universally Japanese implies N-2 by definition, it suffices to show an N-2 Pr\"ufer domain is universally Japanese.

    Let $D$ be a Pr\"ufer domain that is N-2.
    By Lemmas \ref{lem:valN2quot} and \ref{lem:redtoPoly},
    it suffices to show $P:=D[x_1,\ldots,x_n]$ is N-2.

Let $F$ be the fraction field of $D$.
Let $\overline{F}$ be an algebraic closure of $F$.
Let $(F_i)_i$ be the family of all finite extensions of $F$ inside $\overline{F}$.
Let $D_i$  be the integral closure of  $D$ in $F_i$.
Let $P_i=D_i\otimes_D P$,
    a polynomial algebra over $D_i$,
    $K_i$ its fraction field.

As $\bigcup_i D_i$ is an absolutely integrally closed Pr\"ufer domain,
it is universally Japanese by Theorem \ref{thm:ExampleUJ}\ref{ExUJ:absPr},
so $\bigcup_i P_i$
is N-2.
As $D$ is N-2, all $D_i$ are finite over $D$.
By Corollary \ref{cor:unionN2allN2},
all $P_i$ are N-2, in particular $P$ is N-2.

\subsection{Proof of Theorem \ref{thm:absIntNotUJ}}
By \citestacks{032G} it suffices to show that for a Nagata local domain $R$ of equal characteristic zero and dimension $2$,
the absolute integral closure $R^+$ of $R$ is not universally Japanese.
Let $x,y$ be a system of parameters of $R$.
Let $I=(x,y)\subseteq R^+$.
We claim that the Rees algebra
$S=R^+[IT]=\bigoplus_n I^nT^n\subseteq R^+[T]$
is not N-1.
This tells us $R^+$ is not universally Japanese.

Denote by $\overline{J}$ the integral closure of an ideal $J$ in the normal domain $R^+$.
Then the normalization of $S$ is
$S'=\bigoplus_n \overline{I^n}T^n.$
If $S'$ is finite over $S$,
then $\overline{I}/I$, and therefore $\overline{I}$, is finitely generated.
Therefore it suffices to show $\overline{I}$
is not finitely generated.

Assume $\overline{I}$
is finitely generated.
Let $R'$ be a finite $R$-algebra 
inside $R^+$ so that $\overline{I}=(\overline{I}\cap R')R^+$.
We may assume $R'$ is normal as 
$R$ is Nagata.
We replace $R$ with the localization of  $R'$ at a maximal ideal.
We have reduced to the case
$\overline{I}\subseteq \fm R^+$,
where $\fm$ is the maximal ideal of the normal local ring $R$, and we will derive a contradiction.

Let $k$ be a coefficient field of the completion $R^\wedge$,
so the canonical map $a:k[[x,y]]\to R^\wedge$ is finite.
As $R$ is Nagata $R^\wedge$ is reduced \citestacks{0331}.
After a linear change of coordinates ($\bQ$ is infinite!),
we may assume $a$ is \'etale at $(x-y),(x+y)\in\Spec(k[[x,y]])$.

Consider the ring $R'=R[T]/(T^2-x^2+y^2)$, so $R'^\wedge=R^\wedge[T]/(T^2-x^2+y^2)$.
As $R$ is $2$-dimensional and normal, it is Cohen-Macaulay,
thus so is $R'$.
Since $R'$ is \'etale over $R$ on $D(x-y)\cap D(x+y)$ and $R'^\wedge$ is \'etale over $k[[x,y]][T]/(T^2-x^2+y^2)$ at $\sqrt{(x-y)}$ and $\sqrt{(x+y)}$ we see $R'$ is $(R_1)$ and therefore normal.

The element $t\in R'$, image of $T$,
satisfies $t^2=x^2-y^2$.
Therefore (for any embedding $R'\subseteq  R^+$) $t\in \overline{I}\cap R'\subseteq \fm R^+\cap R'$.
However, the normal ring $R'$ of equal characteristic zero is a splinter (cf. \cite[Lemma 2]{Hoc73-splinter}),
so $\fm R^+\cap R'=\fm R'$,
giving $t\in\fm R'$, a contradiction.

\begin{Rem}
    The Rees algebra $S$ used here is actually of finite presentation over $R^+$, as opposed to the situation in Example \ref{exam:infVarsNotUJ}.
    Indeed, as a normal Noetherian ring of dimension $2$ is Cohen--Macaulay,
    $R^+$ is flat over $\bQ[x,y]$,
    so $S$ is a flat base change of the Rees algebra of $(x,y)$ over $\bQ[x,y]$.
    This also implies that $S$ has finite Tor dimension over $R^+$. 
\end{Rem}

\section{Examples}\label{sec:ex}
In \cite{Heitmann-PID-locEx-notEx} a PID $R$ which is locally universally Japanese but not universally Japanese is constructed,
exhibiting failure of Theorem \ref{thm:PrufUJ} in general.
The fraction field of $R$ has characteristic $p$,
which is necessary,
as every Dedekind domain with characteric zero fraction field
is universally Japanese \citestacks{0335}.

For non-Noetherian rings, similar failure can occur in any characteristic.
\begin{Exam}
    Let $V$ be a DVR that is N-2 (equivalent to universally Japanese by Theorem \ref{thm:ValRingUJ}).
    We construct an integral domain $D$ containing $V$ with the property that $D_\fm$ is a universally Japanese DVR
    dominating $V$
    for all $\fm\in\Max(D)$,
    yet $D$ is not N-2.
    
    Let $x_1$ be a uniformizer of $V$.   
    Let $R_0=V$
    and let $R_{i+1}=R_i[X_{i+1},Y_i]/(x_i-X_{i+1}Y_{i}^{2})$,
    a flat $R_i$-algebra,
    where $x_i$ is the image of $X_i$ in $R_i$.
    Then 
    \[
R_{i+1}=V[X_{i+1},Y_1,\ldots,Y_i]/(x_1-X_{i+1}Y_{i}^{2}\ldots Y_2^{2}Y_1^2),
    \]
    and we denote by $y_i$ the image of $Y_i$ in $R_{i+1}$.
    Let $D_{i+1}$ be the localization of $R_{i+1}$ with respect to the complement of $x_{i+1}R_{i+1}\cup y_1R_{i+1}\cup\ldots\cup y_iR_{i+1}$,
    so $D_{i+1}$ is a semilocal PID with maximal ideals
    $x_{i+1}D_{i+1}, y_1D_{i+1},\ldots, y_iD_{i+1}$.
    We also note that $y_jR_{i+1}\cap R_i=y_jR_i$ when $j\leq i$,
    and $y_{i+1}R_{i+1}\cap R_i=x_iR_i=x_{i+1}R_{i+1}\cap R_i$,
    as they are all primes of height $1$ with obvious inclusion relations.
    Therefore the inclusion $R_i\to R_{i+1}$ gives an inclusion $D_i\to D_{i+1}$,
    and we let $D=\bigcup_i D_i$.
    We verify $D$ is the desired example in several steps.

    Write $V_{\infty i}=(D_i)_{x_i D_i}$ and 
    $V_{ji}=(D_i)_{y_j D_i}$ for all $j<i$.
    They are N-2 DVRs as $V$ is universally Japanese.

\begin{SteplocUJnotUJ}\label{step:locUJnotUJ-maxD}
Write $\fm_\infty = (x_1,x_2,\ldots)\subseteq D$
    and $\fm_j=y_jD$.
Then $\Max(D)=\{\fm_j\mid j\in\bZ_{>0}\cup\{\infty\}\}$.
Moreover,
\begin{equation*}
  \fm_j\cap D_i=\begin{cases}
    y_jD_i, & \text{if $j<i$}.\\
    x_iD_i, & \text{otherwise}.
  \end{cases}
\end{equation*}
%

To see this, clearly all $\fm_j$ are maximal ideals.
Let $\fm$ be a maximal ideal of $D$.
For an $i$ so that $\fm\cap D_i\neq 0$, $\fm\cap D_i$ is either $y_j D_i$ for some $j<i$ or $x_i D_i$.
In the first case, $\fm=\fm_j$.
If the second case holds for all large $i$, then $\fm=\fm_\infty$.
\end{SteplocUJnotUJ}

\begin{SteplocUJnotUJ}
    $D_{\fm}$ is a universally Japanese DVR for all $\fm\in\Max(D)$.

If $\fm=\fm_\infty$,
then $D_{\fm}=\bigcup_i V_{\infty i}$,
and a uniformizer of $V_{\infty i}$ goes to a uniformizer of $V_{\infty, i+1}$.
Therefore $D_\fm$ is a DVR.
Moreover, $V_{\infty i}\to V_{\infty, i+1}$ is essentially smooth,
 as $V_{\infty, i+1}\cong V_{\infty i}(Y_i)$.
Therefore $D_\fm$ is N-2 by Lemma \ref{lem:limitN2}.
On the other hand, if $\fm=\fm_j$ for some $j$,
then $D_{\fm}=\bigcup_{i>j} V_{ji}$
and the rest of the proof is the same.
\end{SteplocUJnotUJ}

\begin{SteplocUJnotUJ}
    $D$ is not N-2.

    Indeed, we show that the ring $A=D[T]/(T^n-x_1)$ is not N-1 for all $n\in\bZ_{>1}$.
    Note that $A$ is an integral domain as $x_1$ is a uniformizer of $D_{\fm_\infty}$.
    Let $t\in A$ be the image of $T$.
    Then $A_{\fm_\infty}$ is a DVR with uniformizer $t$.
    On the other hand, for $j\neq\infty$, $A_{\fm_j}=D_{\fm_j}[T]/(T^n-uy_j^{2})$ for some $u\in D_{\fm_j}^\times$.
    This tells us $A_{\fm_j}$ is not normal.
    Indeed, $A_{\fm_j}$ is finite over $D_{\fm_j}$, and $A_{\fm_j}/y_j A_{\fm_j}=(D_{\fm_j}/y_jD_{\fm_j})[T]/(T^n)$ is local,
    so $A_{\fm_j}$ is local;
    therefore it is the quotient of the regular local ring $B:=D_{\fm_j}[T]_{(y_j,T)}$ by the element $T^n-uy_j^{2}$.
    As $T^n-uy_j^{2}$ is in the square of the maximal ideal of $B$ we see $A_{\fm_j}$ is not regular (see \cite[Theorem 14.2]{Mat-CRT}),
    so $A_{\fm_j}$ is not a DVR,
    therefore not normal.

    If the normalization $A'$ of $A$ were finite over $A$,
    then the fact $A_{\fm_\infty}$ is normal tells us $A_{\fm_\infty}=A'_{\fm_\infty}$,
    so $A_{f}=A'_{f}$ for some $f\in D$,
    $f\not\in\fm_\infty$.
    As $A_{\fm_j}$ is not normal we conclude that $f\in\fm_j$ for all $j$.
    This is impossible as if $f\in D_k$,
    then $f\not\in \fm_\infty\cap D_k=x_k D_k=\fm_l\cap D_k$ for all $l\geq k$,
    so $f\not\in \fm_l$ for all $l\geq k$.
\end{SteplocUJnotUJ}
\end{Exam}

In this example, the element $x_1$ has valuation $1$ at $\fm_\infty$ and valuation $2$ at other $\fm_j$,
creating problems.
Indeed we have
\begin{Prop}
    Let $D$ be a Pr\"ufer domain containing $\bQ$.
    Assume that there exists $y\in D$ such that $yD_\fm=\fm D_\fm$ for all $\fm\in\Max(D)$.
    Then $D$ is universally Japanese
    if and only if $D_\fm$ is N-2 for all $\fm\in\Max(D)$.
\end{Prop}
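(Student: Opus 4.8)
The plan is to prove the nontrivial implication, that $D$ is universally Japanese once all $D_\fm$ are N-2; the converse is Lemma \ref{lem:valN2quot}. First I would record that the hypotheses pass to every quotient $D/\fp$: the image $\bar y$ of $y$ is a common uniformizer for the Pr\"ufer domain $D/\fp\supseteq\bQ$, and each local ring $(D/\fp)_{\bar\fm}=D_\fm/\fp D_\fm$ is N-2 by Lemma \ref{lem:valN2quot} applied to the valuation ring $D_\fm$. Hence by Lemma \ref{lem:redtoPoly} it suffices to show $P:=D[x_1,\ldots,x_n]$ is N-2, so I take a domain $B$ finite over $P$ with fraction field $L$ and must prove $B$ is N-1. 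Since $F=\Frac(D)$ has characteristic zero it is universally Japanese, so I may replace $B$ by a finite algebra and assume $B\otimes_D F$ is normal. The structural point behind the hypothesis is that $D/yD$ is von Neumann regular, its localizations $D_\fm/yD_\fm=\kappa(\fm)$ being fields; thus $V(y)=\Max(D)$ and the whole obstruction to normality of $B$ is concentrated over the maximal ideals of $D$, where the only defect can be value-group ramification along $y$.

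The key device is a single base change that unramifies this everywhere at once, which is exactly what the common uniformizer makes possible (and what the preceding Example shows is impossible without it). Put $N=[L:F(x)]!$ and let $D_N=D[y^{1/N},\zeta_N]$. I would check that $D_N$ is a finite Pr\"ufer extension of $D$ lying in the same class: it is a domain finite over $D$, its localizations are valuation rings (the $\zeta_N$-adjunction is finite \'etale as $N\in F^\times$, and the $y^{1/N}$-adjunction is Eisenstein over each $D_\fm$), $y^{1/N}$ is a common uniformizer, and each local ring is again N-2 because the condition ``$\Gamma_\fm/\bZ\epsilon$ divisible'' of Lemma \ref{lem:ValN2} is preserved under scaling the value group by $1/N$. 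The gain is this: for each $\fm$ the valuation ring $D_\fm(x)=P_{\fm P}$ is N-2 (Corollary \ref{cor:VRN2}), so by the proof of Lemma \ref{lem:ValN2} every extension of it to $L$ has value group of the form $\tfrac1e\bZ\epsilon+\Gamma_\fm$ with $e\le[L:F(x)]$, hence $e\mid N$; therefore after adjoining $y^{1/N}$ the finite extension $L_N:=L\otimes_{F(x)}F_N(x)$ becomes unramified at the generic-in-$x$ point over \emph{every} maximal ideal of $D_N$ simultaneously, with separable (characteristic zero) residue extensions.

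Over $D_N$ I would then run the proof of Theorem \ref{thm:PrufUJ} essentially verbatim for the finite $P_N:=D_N[x_1,\ldots,x_n]$-algebra $B_N:=B\otimes_D D_N$ (working factor by factor on the reduced, possibly non-domain $B_N$). The only two places using the hypotheses of that theorem are replaced by the inputs just arranged: unramifiedness of $L_N$ (from the choice of $N$) plays the role of the divisible value group, which there served only to force $\fm C_\fq=\fq C_\fq$, and residue characteristic zero replaces the perfect residue field in producing separability and hence smoothness of $D_N\to C$ near the relevant primes. The covering argument via Lemma \ref{lem:GenericsQC} and Corollary \ref{cor:PruferSerre} then goes through unchanged and shows that the integral closure $\overline{B_N}$ of $B_N$ in $\prod_i L_i=L_N$ is finite over $B_N$, hence finite over $B$ since $B_N$ is finite over $B$.

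It remains to descend finiteness along the \emph{ramified} map $D\to D_N$, for which ordinary flat descent of the normalization (Corollary \ref{cor:NormalizationBaseChange}) is unavailable; this, together with the simultaneous control of all the $D_\fm(x)$, is the main obstacle, and I would resolve it by Galois averaging. Since $\zeta_N\in D_N$, the extension $F_N/F$ is Galois, say with group $G$, acting $B$-linearly on $\overline{B_N}$ with fixed ring $\overline{B_N}^G=\overline{B_N}\cap L=\bar B$, the integral closure of $B$ in $L$. In characteristic zero $e=\tfrac1{|G|}\sum_{g\in G}g$ is an idempotent in $\operatorname{End}_B(\overline{B_N})$, so $\bar B=e\cdot\overline{B_N}$ is a direct summand of the finite $B$-module $\overline{B_N}$; a direct summand of a finite module is finite, whence $\bar B$ is finite over $B$ and $B$ is N-1, as required.
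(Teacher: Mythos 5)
Your route is genuinely different from the paper's, and most of it checks out: the reduction through Lemma \ref{lem:redtoPoly}, the construction of $D_N=D[y^{1/N},\zeta_N]$ as a finite Pr\"ufer extension whose local rings are again N-2 (one small imprecision: the value group of $(D_N)_\fn$ is $\Gamma_\fm+\bZ\tfrac{1}{N}\epsilon$, not $\tfrac{1}{N}\Gamma_\fm$, though the condition of Lemma \ref{lem:ValN2} does persist for it), the factor-by-factor run of the Theorem \ref{thm:PrufUJ} argument, and the Reynolds-operator descent at the end, which is correct because $|G|$ is invertible in $B$ and neatly replaces flat descent. The genuine gap is the step you yourself call the key device: the inference ``every extension $w$ of the Gauss valuation of $D_\fm(x_1,\ldots,x_n)$ to $L$ has value group $\Gamma_\fm+\bZ\tfrac{1}{e}\epsilon$ with $e\mid N$, \emph{therefore} $L_N$ is unramified over $(D_N)_\fn(x_1,\ldots,x_n)$ for every $\fn$.'' That ``therefore'' is not a formal consequence of Lemma \ref{lem:ValN2} or of Bourbaki's theorem: knowing that $\Gamma_w$ is contained in the value group of the new base does not by itself control the value groups of extensions of the \emph{composite} field. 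What you are invoking is Abhyankar's lemma for tame extensions of valuation rings. It is true here---residue characteristic zero makes every extension tame, and one can prove it by passing to strict henselizations (which preserve value groups) and using the lattice correspondence between finite extensions of a strictly henselian valued field of residue characteristic zero and finite-index extensions of its value group, under which composita go to sums---but none of that theory is in the paper, and your proposal neither proves nor cites it. The inference as literally written (from $e\mid N$ alone) is false in residue characteristic $p$: for $K=\bF_p((t))$ and $L$ given by $x^p-x=1/t$, one has $\Gamma_L\subseteq\Gamma_{K(t^{1/p})}$, yet $L\cdot K(t^{1/p})$ is still totally ramified of degree $p$ over $K(t^{1/p})$. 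And the step is load-bearing: if the ramification index over $(D_N)_\fn(x_1,\ldots,x_n)$ were $>1$, then $\fn C_\fq\neq\fq C_\fq$ and the fiber $C_\fq/\fn C_\fq$ would contain nilpotents, so the smoothness step borrowed from Theorem \ref{thm:PrufUJ} collapses.

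For contrast, the paper sidesteps exactly this point. It first invokes Theorem \ref{thm:smallPrUJ}, so that only the N-2 property of $D$ itself---not of $D[x_1,\ldots,x_n]$---needs to be established; then, instead of one finite base change adapted to $L$, it adjoins a compatible tower of roots $y_n$ of $y$ (with $y_0=y$, $y_n^n=y_{n-1}$) and passes to $D_\infty=\bigcup_n D[y_n]$, whose value groups at maximal ideals are \emph{divisible}; this needs only Lemma \ref{lem:ValN2} and no ramification theory, and makes $D_\infty$ universally Japanese by Theorem \ref{thm:ExampleUJ}\ref{ExUJ:char0Pr}. The descent from $D_\infty$ back to $D$ is then Corollary \ref{cor:unionN2allN2}, which plays the role of your Galois averaging. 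If you supply a proof (or a precise reference plus compatibility check) of the tame Abhyankar lemma for these Gauss valuations, your argument closes and has the merit of staying within a single finite extension; as written, it has a hole at its central step.
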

The rings considered in \cite[\S 3]{Loper-Lucas-Almost-Dedekind-branches} are covered by this result if the characteristic of $K$ is zero.
\begin{proof}
If $D$ is universally Japanese, then trivially $D$ is N-2,
so $D_\fm$ is N-2 for all $\fm\in\Max(D)$
by Lemma \ref{lem:valN2quot}.
Conversely,
assume $D_\fm$ is N-2 for every $\fm\in\Max(D)$,
we will show $D$ is universally Japanese.
By Theorem \ref{thm:smallPrUJ},
it suffices to show $D$ is N-2.

We aim to apply Corollary \ref{cor:unionN2allN2}.
    Let $\overline{F}$ be an algebraic closure of the fraction field $F$ of $D$.
    Inductively choose elements $y_i\in\overline{F}$ so that $y_0=y$ and $y_{n}^{n}=y_{n-1}$.
    Write $D_n=D[y_n]\subseteq \overline{F}$.
    We see inductively that
    $D_n$ is Pr\"ufer, $y_n(D_n)_\fn=\fn (D_n)_\fn$ for all $\fn\in\Max(D_n)$,
    and $\Max(D_n)\to\Max(D)$ is bijective.

    Let $D_\infty=\bigcup_n D_n$,
    so $D_\infty$ is Pr\"ufer and $\Max(D_\infty)\to\Max(D)$ is bijective.
    By Lemma \ref{lem:ValN2} the value groups of the local rings of $D_\infty$ at maximal ideals are divisible.
    By Theorem \ref{thm:ExampleUJ}\ref{ExUJ:char0Pr},
    $D_\infty$ is N-2.
    As every $D_n$ is finite over $D$ by construction,
    Corollary \ref{cor:unionN2allN2} tells us every $D_n$,
    in particular $D$, is N-2, as desired.
\end{proof}

We include another example.
\begin{Exam}
    The group ring $\bigcup_i k[X^{1/m_i},X^{-1/m_i}]$,
where $m_i$ are positive integers with $m_i|m_{i+1}$
and $k$ is a field whose characteristic does not divide $\frac{m_{i+1}}{m_i}$ for large $i$,
considered in \cite[\S 3--4]{GP-semigroup-Prufer}, is universally Japanese.
Indeed, $k[X^{1/m_i},X^{-1/m_i}]\to k[X^{1/m_{i+1}},X^{-1/m_{i+1}}]$
is \'etale for large $i$,
and these rings are finite type over $k$,
so Lemma \ref{lem:limitN2} applies.
Similarly,
$\bigcup_i \bZ[p^{1/p^i},p^{-1/p^i}]$
is universally Japanese.
\end{Exam}

\printbibliography
\end{document}